\documentclass[a4paper,10pt]{amsart}

\usepackage[utf8]{inputenc}
\usepackage{url}

\textwidth=14.5cm
\oddsidemargin=1cm
\evensidemargin=1cm

\usepackage[expansion=false]{microtype}
\usepackage{amssymb,amsmath,amsopn,amsxtra,amsthm,amsfonts}
\usepackage{xr}
\usepackage[mathcal]{euscript}
\usepackage{mathalfa}

\usepackage[english, status=final, nomargin, inline]{fixme}
\fxusetheme{color}

\usepackage[pdfusetitle,unicode,hidelinks]{hyperref}

\usepackage{enumitem}
\setlist[enumerate,1]{label={(\arabic*)},itemsep=\parskip} 
\setlist[itemize,1]{itemsep=\parskip} 
\newlist{thmlist}{enumerate}{2}
\setlist[thmlist,1]{label={\em(\roman*)},ref={(\roman*)},%
  itemsep=\parskip,leftmargin=*,align=left}
\setlist[thmlist,2]{label={\em(\alph*)},ref={(\alph*)},%
  itemsep=\parskip,leftmargin=*,align=left,topsep=0.1cm}
\newlist{remlist}{enumerate}{2}
\setlist[remlist,1]{label={(\roman*)},ref={(\roman*)},itemsep=\parskip,%
  leftmargin=*,align=left}
\setlist[remlist,2]{label={(\alph*)},ref={(\alph*)},itemsep=\parskip,%
  leftmargin=*,align=left,topsep=0.1cm}


\makeatletter
\let\c@equation\c@subsubsection

\makeatother

\newtheorem{cor}[subsubsection]{Corollary}
\newtheorem{lem}[subsubsection]{Lemma}
\newtheorem{prop}[subsubsection]{Proposition}

\newtheorem{thm}[subsubsection]{Theorem}

\newtheorem*{claim*}{Claim}


\theoremstyle{definition}

\newtheorem{rem}[subsubsection]{Remark}

\renewcommand{\eqref}[1]{(\ref{#1})}

\usepackage{tikz}
\usetikzlibrary{matrix}
\usepackage{tikz-cd}



\usepackage{xspace}

\usepackage{xifthen}

\usepackage{xparse}


\newcommand{\nc}{\newcommand}
\nc{\renc}{\renewcommand}
\nc{\ssec}{\subsection}
\nc{\sssec}{\subsubsection}
\nc{\on}{\operatorname}
\nc{\term}[1]{#1\xspace}


\DeclareMathSymbol{A}{\mathalpha}{operators}{`A}
\DeclareMathSymbol{B}{\mathalpha}{operators}{`B}
\DeclareMathSymbol{C}{\mathalpha}{operators}{`C}
\DeclareMathSymbol{D}{\mathalpha}{operators}{`D}
\DeclareMathSymbol{E}{\mathalpha}{operators}{`E}
\DeclareMathSymbol{F}{\mathalpha}{operators}{`F}
\DeclareMathSymbol{G}{\mathalpha}{operators}{`G}
\DeclareMathSymbol{H}{\mathalpha}{operators}{`H}
\DeclareMathSymbol{I}{\mathalpha}{operators}{`I}
\DeclareMathSymbol{J}{\mathalpha}{operators}{`J}
\DeclareMathSymbol{K}{\mathalpha}{operators}{`K}
\DeclareMathSymbol{L}{\mathalpha}{operators}{`L}
\DeclareMathSymbol{M}{\mathalpha}{operators}{`M}
\DeclareMathSymbol{N}{\mathalpha}{operators}{`N}
\DeclareMathSymbol{O}{\mathalpha}{operators}{`O}
\DeclareMathSymbol{P}{\mathalpha}{operators}{`P}
\DeclareMathSymbol{Q}{\mathalpha}{operators}{`Q}
\DeclareMathSymbol{R}{\mathalpha}{operators}{`R}
\DeclareMathSymbol{S}{\mathalpha}{operators}{`S}
\DeclareMathSymbol{T}{\mathalpha}{operators}{`T}
\DeclareMathSymbol{U}{\mathalpha}{operators}{`U}
\DeclareMathSymbol{V}{\mathalpha}{operators}{`V}
\DeclareMathSymbol{W}{\mathalpha}{operators}{`W}
\DeclareMathSymbol{X}{\mathalpha}{operators}{`X}
\DeclareMathSymbol{Y}{\mathalpha}{operators}{`Y}
\DeclareMathSymbol{Z}{\mathalpha}{operators}{`Z}

\nc{\sA}{\ensuremath{\mathcal{A}}\xspace}
\nc{\sB}{\ensuremath{\mathcal{B}}\xspace}
\nc{\sC}{\ensuremath{\mathcal{C}}\xspace}
\nc{\sD}{\ensuremath{\mathcal{D}}\xspace}
\nc{\sE}{\ensuremath{\mathcal{E}}\xspace}
\nc{\sF}{\ensuremath{\mathcal{F}}\xspace}
\nc{\sG}{\ensuremath{\mathcal{G}}\xspace}
\nc{\sH}{\ensuremath{\mathcal{H}}\xspace}
\nc{\sI}{\ensuremath{\mathcal{I}}\xspace}
\nc{\sJ}{\ensuremath{\mathcal{J}}\xspace}
\nc{\sK}{\ensuremath{\mathcal{K}}\xspace}
\nc{\sL}{\ensuremath{\mathcal{L}}\xspace}
\nc{\sM}{\ensuremath{\mathcal{M}}\xspace}
\nc{\sN}{\ensuremath{\mathcal{N}}\xspace}
\nc{\sO}{\ensuremath{\mathcal{O}}\xspace}
\nc{\sP}{\ensuremath{\mathcal{P}}\xspace}
\nc{\sQ}{\ensuremath{\mathcal{Q}}\xspace}
\nc{\sR}{\ensuremath{\mathcal{R}}\xspace}
\nc{\sS}{\ensuremath{\mathcal{S}}\xspace}
\nc{\sT}{\ensuremath{\mathcal{T}}\xspace}
\nc{\sU}{\ensuremath{\mathcal{U}}\xspace}
\nc{\sV}{\ensuremath{\mathcal{V}}\xspace}
\nc{\sW}{\ensuremath{\mathcal{W}}\xspace}
\nc{\sX}{\ensuremath{\mathcal{X}}\xspace}
\nc{\sY}{\ensuremath{\mathcal{Y}}\xspace}
\nc{\sZ}{\ensuremath{\mathcal{Z}}\xspace}

\nc{\bA}{\ensuremath{\mathbf{A}}\xspace}
\nc{\bB}{\ensuremath{\mathbf{B}}\xspace}
\nc{\bC}{\ensuremath{\mathbf{C}}\xspace}
\nc{\bD}{\ensuremath{\mathbf{D}}\xspace}
\nc{\bE}{\ensuremath{\mathbf{E}}\xspace}
\nc{\bF}{\ensuremath{\mathbf{F}}\xspace}
\nc{\bG}{\ensuremath{\mathbf{G}}\xspace}
\nc{\bH}{\ensuremath{\mathbf{H}}\xspace}
\nc{\bI}{\ensuremath{\mathbf{I}}\xspace}
\nc{\bJ}{\ensuremath{\mathbf{J}}\xspace}
\nc{\bK}{\ensuremath{\mathbf{K}}\xspace}
\nc{\bL}{\ensuremath{\mathbf{L}}\xspace}
\nc{\bM}{\ensuremath{\mathbf{M}}\xspace}
\nc{\bN}{\ensuremath{\mathbf{N}}\xspace}
\nc{\bO}{\ensuremath{\mathbf{O}}\xspace}
\nc{\bP}{\ensuremath{\mathbf{P}}\xspace}
\nc{\bQ}{\ensuremath{\mathbf{Q}}\xspace}
\nc{\bR}{\ensuremath{\mathbf{R}}\xspace}
\nc{\bS}{\ensuremath{\mathbf{S}}\xspace}
\nc{\bT}{\ensuremath{\mathbf{T}}\xspace}
\nc{\bU}{\ensuremath{\mathbf{U}}\xspace}
\nc{\bV}{\ensuremath{\mathbf{V}}\xspace}
\nc{\bW}{\ensuremath{\mathbf{W}}\xspace}
\nc{\bX}{\ensuremath{\mathbf{X}}\xspace}
\nc{\bY}{\ensuremath{\mathbf{Y}}\xspace}
\nc{\bZ}{\ensuremath{\mathbf{Z}}\xspace}

\nc{\dA}{\ensuremath{\mathds{A}}\xspace}
\nc{\dB}{\ensuremath{\mathds{B}}\xspace}
\nc{\dC}{\ensuremath{\mathds{C}}\xspace}
\nc{\dD}{\ensuremath{\mathds{D}}\xspace}
\nc{\dE}{\ensuremath{\mathds{E}}\xspace}
\nc{\dF}{\ensuremath{\mathds{F}}\xspace}
\nc{\dG}{\ensuremath{\mathds{G}}\xspace}
\nc{\dH}{\ensuremath{\mathds{H}}\xspace}
\nc{\dI}{\ensuremath{\mathds{I}}\xspace}
\nc{\dJ}{\ensuremath{\mathds{J}}\xspace}
\nc{\dK}{\ensuremath{\mathds{K}}\xspace}
\nc{\dL}{\ensuremath{\mathds{L}}\xspace}
\nc{\dM}{\ensuremath{\mathds{M}}\xspace}
\nc{\dN}{\ensuremath{\mathds{N}}\xspace}
\nc{\dO}{\ensuremath{\mathds{O}}\xspace}
\nc{\dP}{\ensuremath{\mathds{P}}\xspace}
\nc{\dQ}{\ensuremath{\mathds{Q}}\xspace}
\nc{\dR}{\ensuremath{\mathds{R}}\xspace}
\nc{\dS}{\ensuremath{\mathds{S}}\xspace}
\nc{\dT}{\ensuremath{\mathds{T}}\xspace}
\nc{\dU}{\ensuremath{\mathds{U}}\xspace}
\nc{\dV}{\ensuremath{\mathds{V}}\xspace}
\nc{\dW}{\ensuremath{\mathds{W}}\xspace}
\nc{\dX}{\ensuremath{\mathds{X}}\xspace}
\nc{\dY}{\ensuremath{\mathds{Y}}\xspace}
\nc{\dZ}{\ensuremath{\mathds{Z}}\xspace}

\nc{\bbA}{\ensuremath{\mathbb{A}}\xspace}
\nc{\bbB}{\ensuremath{\mathbb{B}}\xspace}
\nc{\bbC}{\ensuremath{\mathbb{C}}\xspace}
\nc{\bbD}{\ensuremath{\mathbb{D}}\xspace}
\nc{\bbE}{\ensuremath{\mathbb{E}}\xspace}
\nc{\bbF}{\ensuremath{\mathbb{F}}\xspace}
\nc{\bbG}{\ensuremath{\mathbb{G}}\xspace}
\nc{\bbH}{\ensuremath{\mathbb{H}}\xspace}
\nc{\bbI}{\ensuremath{\mathbb{I}}\xspace}
\nc{\bbJ}{\ensuremath{\mathbb{J}}\xspace}
\nc{\bbK}{\ensuremath{\mathbb{K}}\xspace}
\nc{\bbL}{\ensuremath{\mathbb{L}}\xspace}
\nc{\bbM}{\ensuremath{\mathbb{M}}\xspace}
\nc{\bbN}{\ensuremath{\mathbb{N}}\xspace}
\nc{\bbO}{\ensuremath{\mathbb{O}}\xspace}
\nc{\bbP}{\ensuremath{\mathbb{P}}\xspace}
\nc{\bbQ}{\ensuremath{\mathbb{Q}}\xspace}
\nc{\bbR}{\ensuremath{\mathbb{R}}\xspace}
\nc{\bbS}{\ensuremath{\mathbb{S}}\xspace}
\nc{\bbT}{\ensuremath{\mathbb{T}}\xspace}
\nc{\bbU}{\ensuremath{\mathbb{U}}\xspace}
\nc{\bbV}{\ensuremath{\mathbb{V}}\xspace}
\nc{\bbW}{\ensuremath{\mathbb{W}}\xspace}
\nc{\bbX}{\ensuremath{\mathbb{X}}\xspace}
\nc{\bbY}{\ensuremath{\mathbb{Y}}\xspace}
\nc{\bbZ}{\ensuremath{\mathbb{Z}}\xspace}

\nc{\mrm}[1]{\ensuremath{\mathrm{#1}}\xspace}
\nc{\mbf}[1]{\ensuremath{\mathbf{#1}}\xspace}
\nc{\mcal}[1]{\ensuremath{\mathcal{#1}}\xspace}
\nc{\msc}[1]{\ensuremath{\mathscr{#1}}\xspace}

\renc{\bar}[1]{\overline{#1}}

\nc{\sub}{\subset}
\nc{\too}{\longrightarrow}
\nc{\hook}{\hookrightarrow}
\nc*{\hooklongrightarrow}{\ensuremath{\lhook\joinrel\relbar\joinrel\rightarrow}}
\nc{\hooklong}{\hooklongrightarrow}
\nc{\twoheadlongrightarrow}{\relbar\joinrel\twoheadrightarrow}
\nc{\shiso}{\approx}
\nc{\isoto}{\xrightarrow{\sim}}
\nc{\isofrom}{\xleftarrow{\sim}}
\renc{\ge}{\geqslant}
\renc{\le}{\leqslant}
\renc{\geq}{\geqslant}
\renc{\leq}{\leqslant}

\nc{\id}{\mathrm{id}}

\DeclareMathOperator{\Hom}{\mathrm{Hom}}
\nc{\uHom}{\underline{\smash{\Hom}}}
\DeclareMathOperator{\Maps}{\mathrm{Maps}}

\DeclareMathOperator{\End}{\mathrm{End}}

\nc{\Pre}{\mathrm{PSh}{}}
\nc{\Shv}{\mathrm{Shv}{}}
\nc{\uEnd}{\underline{\smash{\End}}}

\renc{\lim}{\operatorname*{lim}}
\nc{\colim}{\operatorname*{colim}}
\nc{\Cofib}{\on{Cofib}}
\nc{\Fib}{\on{Fib}}
\nc{\initial}{\varnothing}
\nc{\op}{\mathrm{op}}


\renc{\coprod}{\sqcup}

\nc{\bDelta}{\mbf{\Delta}}
\nc{\DM}{\mbf{DM}}
\nc{\eff}{\mathrm{eff}}
\nc{\veff}{\mathrm{veff}}
\nc{\cyc}{{\mrm{cyc}}}
\nc{\corr}{{\on{corr}}}
\nc{\ft}{\mrm{ft}}
\nc{\flf}{\mrm{flf}}
\nc{\fet}{{\mrm{f\acute et}}}
\nc{\fsyn}{{\mrm{fsyn}}}
\nc{\syn}{{\mrm{syn}}}
\nc{\lci}{{\mrm{lci}}}
\nc{\Perf}{\mbf{Perf}}
\nc{\perf}{\mrm{perf}}
\nc{\oblv}{\mrm{oblv}}
\nc{\exact}{\on{exact}}

\nc{\F}{{\on{F}}}
\nc{\clopen}{{\mrm{clopen}}}
\nc{\B}{\mrm{B}}
\nc{\D}{\mrm{D}}
\nc{\Fin}{\on{Fin}}
\nc{\fin}{\mrm{fin}}
\nc{\Cut}{\on{Cut}}
\nc{\Cart}{\on{Cart}}
\nc{\pairs}{\mathsf{pairs}}
\nc{\Pairs}{\mathrm{Pair}}
\nc{\Trip}{\mathrm{Trip}}
\nc{\Lab}{\mathrm{Lab}}
\nc{\SL}{\mathrm{SL}}
\nc{\coCart}{\mathrm{coCart}}
\nc{\RKE}{\mathrm{RKE}}
\nc{\strict}{\mathrm{strict}}
\nc{\Emb}{\mathrm{Emb}}
\nc{\Split}{\mathrm{Split}}
\nc{\Set}{\mathrm{Set}}
\nc{\sSets}{\mathrm{sSets}}
\nc{\pb}{\mathrm{pb}}
\nc{\fib}{\mathrm{fib}}
\nc{\diff}{\mrm{diff}}
\nc{\gp}{\mrm{gp}}
\nc{\map}{\mrm{map}}

\nc{\mgp}{\mrm{mot-gp}}
\nc{\FSyn}{\mrm{FSyn}}
\nc{\FEt}{\mrm{FEt}}
\nc{\Spc}{\mrm{Spc}}
\nc{\Ob}{\mrm{Ob}}
\nc{\Spt}{\mrm{Spt}}
\nc{\T}{\bT}
\nc{\suspinf}{\Sigma^\infty}
\nc{\h}{\mrm{h}}
\nc{\uhom}{\underline{\mathrm{Hom}}}
\nc{\umap}{\underline{\mathrm{Maps}}}
\renc{\H}{\bH}
\nc{\Einfty}{{\sE_\infty}}
\nc{\Eone}{{\sE_1}}
\nc{\Stab}{\mrm{Stab}}
\nc{\lax}{{\mrm{lax}}}
\nc{\cocart}{{\mrm{cocart}}}
\nc{\Sch}{\on{Sch}}
\nc{\Fr}{\on{Fr}}
\nc{\A}{\mathbf{A}}
\nc{\N}{\mathbf{N}}
\nc{\Z}{\mathbf{Z}}
\nc{\Q}{\mathbf{Q}}
\nc{\Oo}{\mathcal{O}} 
\nc{\Fscr}{\mathcal{F}}
\nc{\Gscr}{\mathcal{G}}
\nc{\Ll}{\mathcal{L}} 
\nc{\Mm}{\mathcal{M}} 
\nc{\mm}{\mathrm{m}} 
\nc{\K}{\mrm{K}} 
\nc{\W}{\mrm{W}} 
\nc{\red}{{\on{red}}}
\nc{\Voev}{{\on{Voev}}}
\nc{\Corr}{\mrm{Corr}}
\nc{\Span}{\mathbf{Corr}}
\nc{\Gap}{\mrm{Gap}}
\nc{\Corrfr}{\Corr^{\fr}}
\nc{\Corrvfr}{\Corr^{\Vfr}}
\nc{\Spec}{\on{Spec}}
\nc{\Sm}{\on{Sm}}
\nc{\Gm}{\mathbf{G}_{\on{m}}}
\renc{\P}{\bP}
\nc{\nis}{\mathrm{nis}}
\nc{\zar}{\mathrm{zar}}
\nc{\et}{\mathrm{\acute et}}
\nc{\all}{\mathrm{all}}
\nc{\fold}{\mathrm{fold}}
\nc{\Fun}{\mathrm{Fun}}
\nc{\Ho}{\mathrm{Ho}}
\nc{\Segal}{\mathrm{Segal}}
\nc{\Mon}{\mrm{Mon}{}}
\nc{\Ab}{\mrm{Ab}}
\nc{\Sh}{\on{Sh}}
\nc{\M}{\mrm{M}}
\nc{\Lhtp}{L_{\A^1}}
\nc{\Lmot}{L_{\mrm{mot}}}
\nc{\mot}{\mrm{mot}}
\nc{\SH}{\mbf{SH}}
\nc{\RR}{\mbf{R}}
\nc{\CC}{\mbf{C}}
\nc{\Mod}{\mbf{Mod}}
\nc{\QCoh}{\mbf{QCoh}}
\nc{\MonUnit}{\mbf{1}}
\nc{\1}{\mbf{1}}
\nc{\tr}{\on{tr}}
\nc{\cotr}{\mrm{cotr}}
\nc{\vop}{\mrm{vop}}
\nc{\fr}{{\on{fr}}}
\nc{\Ar}{\mrm{Ar}}
\nc{\Vfr}{\on{Vfr}}
\nc{\frdiff}{{\on{frdiff}}}
\nc{\frGys}{\on{frGys}}
\nc{\SHfr}{\SH^{\fr}}
\nc{\SHfrdiff}{\SH^{\frdiff}}
\nc{\SHfrGys}{\SH^{\frGys}}
\nc{\InftyCat}{(\mathrm{\infty,1)\textnormal{-}Cat}}
\nc{\TriCat}{\mathrm{TriCat}}
\nc{\oneCat}{\mathrm{1\textnormal{-}Cat}}
\nc{\Cat}{\mathrm{Cat}}
\nc{\Th}{\on{Th}}

\nc{\CMon}{\mrm{CMon}{}}
\nc{\CAlg}{\mrm{CAlg}{}}
\nc{\MGL}{\mrm{MGL}}
\nc{\Seg}{\mrm{Seg}{}}
\nc{\GW}{\mrm{GW}{}}
\nc{\Tw}{\mrm{Tw}}
\nc{\sslash}{/\mkern-6mu/}
\nc{\PrL}{\mrm{Pr}^\mrm{L}}
\nc{\PrR}{\mrm{Pr}^\mrm{R}}
\nc{\pr}{\mrm{pr}}
\let\phi\varphi

\nc\efr{\mrm{efr}}
\nc\nfr{\mrm{nfr}}
\nc\dfr{\mrm{fr}}
\nc\tfr{\mrm{tfr}}
\nc\Vect{\mrm{Vect}}
\nc\sVect{\mrm{sVect}}
\nc{\fix}{\mrm{fix}}
\nc{\ho}{\mrm{h}}
\nc\Mfd{\mrm{Mfd}}
\nc{\PSh}{\mrm{PSh}}
\nc{\hzmw}{H \tilde\Z{}}
\nc{\Cor}{\mrm{Cor}{}}
\nc{\cormw}{\mrm{\widetilde{Cor}}{}}
\nc{\Chw}{\mrm{\widetilde{CH}}{}}
\nc{\Ex}{\mrm{Ex}}
\nc{\BM}{\mrm{BM}}

\nc{\Pic}{\mrm{Pic}}
\nc{\Br}{\mrm{Br}}
\nc{\pur}{\mathfrak p}
\nc{\angles}[1]{\langle #1\rangle}
\nc{\inv}[1]{[\tfrac{1}{#1}]}
\nc{\pinv}{\inv{p}}
\nc{\cinv}{\inv{p}}
\nc{\Sph}{\on{Sph}}
\nc{\KGL}{\mrm{KGL}}
\nc{\KH}{\mrm{KH}}
\nc{\Flag}{\mrm{Flag}}
\nc{\Pro}{\mrm{Pro}}
\nc{\Frac}{\mrm{Frac}}

\nc{\arc}{\mrm{arc}}
\nc{\rarc}{\mrm{rarc}}
\nc{\cdarc}{\mrm{cdarc}}
\nc{\vv}{\mrm{v}}
\nc{\rv}{\mrm{rv}}
\nc{\cdv}{\mrm{cdv}}
\nc{\hh}{\mrm{h}}
\nc{\cdh}{\mrm{cdh}}
\nc{\rh}{\mathrm{rh}}
\nc{\Et}{\mathrm{Et}}
\nc{\Nis}{\mathrm{Nis}}
\nc{\Zar}{\mathrm{Zar}}
\nc{\cdp}{\mathrm{cdp}}

\nc{\RZ}{\mathrm{RZ}}
\nc{\qcqs}{\mathrm{qcqs}}
\nc{\aff}{\mathrm{aff}}
\nc{\cl}{\mathrm{cl}}
\nc{\Val}{\mathrm{Val}}
\nc{\GFin}{\mathrm{GFin}{}}
\nc{\Proj}{\mathrm{Proj}}


\nc{\inftyCat}{\term{$\infty$-category}}
\nc{\inftyCats}{\term{$\infty$-categories}}

\nc{\inftyOneCat}{\term{$(\infty,1)$-category}}
\nc{\inftyOneCats}{\term{$(\infty,1)$-categories}}

\nc{\inftyGrpd}{\term{$\infty$-groupoid}}
\nc{\inftyGrpds}{\term{$\infty$-groupoids}}

\nc{\inftyTop}{\term{$\infty$-topos}}
\nc{\inftyTops}{\term{$\infty$-toposes}}

\nc{\inftyTwoCat}{\term{$(\infty,2)$-category}}
\nc{\inftyTwoCats}{\term{$(\infty,2)$-categories}}

\title{}
\title{THH and TC are (very) far from being homotopy functors}

\author[E. Elmanto]{Elden Elmanto}
\address{Department of Mathematics\\
Harvard University\\
1 Oxford St.\
Cambridge, MA 02138\\
USA}
\email{\href{mailto:elmanto@math.harvard.edu}{elmanto@math.harvard.edu}}
\urladdr{\url{https://www.eldenelmanto.com/}}

\date{\today}
\keywords{Topological hochschild homology, topological cyclic homology, motivic homotopy theory.}

\begin{document}

\def\comp{\wedge}

\maketitle

\begin{abstract} We compute the $\bA^1$-localization of several invariants of schemes namely, topological Hochschild homology ($THH$), topological cyclic homology ($TC$) and topological periodic cyclic homology ($TP$). This procedure is quite brutal and kills the completed versions of most of these invariants. The main ingredient for the vanishing statements is the vanishing of $\bA^1$-localization of de Rham cohomology (and, eventually, crystalline cohomology) in positive characteristics.
\end{abstract}


\section{Introduction and vista}

In this short paper, we compute the $\bA^1$-localization of several invariants relevant to recent developments in $p$-adic Hodge theory. They turn out to be mostly zero. These are ``no-go theorems" which state that, at least in characteristic $p$, the motivic perspective of Morel and Voevodsky \cite{MV}, which is based on the notion of $\bA^1$-invariance, is incompatible \emph{in a strong way} with the motivic perspective of \cite{BMS2}, which is based on descent properties of ``trace invariants" (e.g. topological Hochschild and cyclic homology) of schemes.

This is perhaps unsurprising: Ayoub in \cite[Lemme 3.10]{AyoubEtale} and Cisinski-D\'eglise in \cite[Proposition A.3.1]{CDetale} have proved that versions of \'etale motives are $\bZ[\tfrac{1}{p}]$-linear while Bachmann and Hoyois have upgraded these to a spectral version (unpublished). In other words, the stable $\infty$-category of $p$-complete \'etale motives on a characteristic $p$-scheme vanishes. On the other hand the aforementioned ``trace invariants" are $p$-adic \'etale sheaves. This paper adds another collection of results along these lines (though, \emph{a priori} unrelated).

The key actor among these ``trace invariants" is topological cyclic homology $TC$ of \cite{bhm}, recently revisited by Nikolaus and Scholze \cite{nikolaus-scholze}. This theory is not, in general, $\bA^1$-invariant. In fact, as explained in Remark~\ref{rem:tc}, it \emph{must} not be in for it to be of any use for $K$-theory. We prove:
\begin{thm} $L_{\bA^1}TC$ vanishes after profinite completion. In other words, $L_{\bA^1}TC$ is purely rational.
\end{thm}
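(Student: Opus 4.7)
The plan is to reduce via the Nikolaus--Scholze description of $TC$ to vanishing statements for $TC^-$ and $TP$, then exploit the motivic filtration of Bhatt--Morrow--Scholze (BMS2) to relate these to crystalline/syntomic invariants, and finally invoke the main input flagged in the abstract: the vanishing of $L_{\bA^1}$ applied to de Rham (and crystalline) cohomology in positive characteristic.

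\smallskip

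First I would observe that profinite completion of $TC$ splits as a product over primes $p$ of its $p$-completions $TC_p$, and that by Nikolaus--Scholze each $TC_p$ sits in a fiber sequence
\[
TC_p \too TC^-_p \xrightarrow{\varphi - \mathrm{can}} TP_p.
\]
Since $L_{\bA^1}$ is an exact localization of the stable $\infty$-category of motivic presheaves of spectra, it suffices to establish profinite vanishing of $L_{\bA^1}TC^-$ and of $L_{\bA^1}TP$ separately.

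Next, I handle the piece at the residue characteristic $p$. On characteristic-$p$ schemes, the BMS2 motivic filtrations on $TC^-_p$ and $TP_p$ are complete, with associated graded pieces given by shifts and twists of Nygaard-filtered prismatic cohomology, which reduce in characteristic $p$ to de Rham--Witt and ultimately to crystalline cohomology. I would apply $L_{\bA^1}$ graded-by-graded and invoke the main input that $L_{\bA^1}$ of de Rham cohomology vanishes in positive characteristic; propagating this up the de Rham--Witt tower via the standard short exact sequences yields the vanishing of $L_{\bA^1}$ on each associated graded piece, and hence on $TC^-_p$ and $TP_p$ via completeness of the filtration. For a prime $\ell$ invertible on the base, the $\ell$-complete piece of $TC$ is controlled \'etale-locally (using that the cyclotomic trace is close to an equivalence $\ell$-completely on reasonable inputs), and the desired vanishing follows from the $\bZ\inv{p}$-linearity of $p$-complete \'etale motives in characteristic $p$ noted in the introduction, combined with ordinary $\bA^1$-invariance of the prime-to-$p$ \'etale theory. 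Feeding both vanishings into the Nikolaus--Scholze fiber sequence then delivers the profinite vanishing of $L_{\bA^1}TC$.

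The principal obstacle is the commutation of $L_{\bA^1}$ with the inverse limit computing $TC^-_p$ and $TP_p$ from their BMS2 graded pieces, and with $p$-adic completion: $L_{\bA^1}$ is a left Bousfield localization and does not a priori preserve such limits. I expect this to require either a connectivity/finiteness argument showing that, on each bounded truncation, only finitely many graded pieces contribute (so that the filtration becomes effectively finite), or a direct verification that $L_{\bA^1}$ respects the specific filtrations at hand — and this is where I anticipate most of the technical work.
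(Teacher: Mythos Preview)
Your proposal has the right ingredients for the characteristic $p$ part but misfires in two places, and misses the main step of the integral result entirely.

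First, attacking $TC^-_p$ and $TP_p$ separately via their BMS filtrations is exactly the approach the paper avoids, and for the reason you flag at the end: the BMS filtration on $TP$ is $\bZ$-indexed with graded pieces $R\Gamma_{\mrm{crys}}(-/W)[2q]$ for all $q\in\bZ$, so there is no connectivity estimate forcing $\lim_i L_{\bA^1}\mrm{Fil}^{\geq i}TP$ to vanish, and your ``graded-by-graded'' argument does not close. The paper works instead with the BMS filtration on $TC$ itself, where $\mrm{Fil}^{\geq i}_{BMS}TC$ is $(i-1)$-connective; this makes the $L_{\bA^1}$-localized filtration complete, and multiplicativity reduces to vanishing of the $0$th graded piece, which is a fiber of maps between crystalline cohomology. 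The vanishing of $L_{\bA^1}TP_p$ is then \emph{deduced} from that of $L_{\bA^1}TC$ (not the other way around), by arguing that $\phi_p$ and $\mrm{can}$ become homotopic and that $p$ acts invertibly; alternatively one can mod out by $p$ to reduce to $HP$, but the BMS-filtration route you propose for $TP$ does not work as stated.

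Second, and more seriously, you give no mechanism to pass from arbitrary (say $\bZ$-) bases to characteristic $p$. Your paragraph about a prime $\ell$ invertible on the base and $\bZ[\tfrac{1}{p}]$-linearity of \'etale motives is not the relevant reduction: over $\bZ$ no prime is invertible, and over $\bF_p$-algebras $TC$ is already $p$-complete so there is nothing to do at $\ell\neq p$. The paper's reduction is different: it shows $L_{\bA^1}TC$ is a \emph{truncating} invariant (as the cofiber of $L_{\bA^1}K^{\mrm{inv}}\to KH$, both truncating by Dundas--Goodwillie--McCarthy and Land--Tamme respectively), hence nilinvariant; combined with continuity of $TC$ along $\bZ_p\to\bZ_p/p^s$, this collapses $(L_{\bA^1}TC^{\comp}_p)(\bZ_p)$ to $(L_{\bA^1}TC^{\comp}_p)(\bF_p)$, which vanishes by the characteristic $p$ argument. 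Without this truncating/nilinvariance step, your outline does not reach the integral statement.
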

This is stated more precisely in Theorem~\ref{thm:vanish}, and says that the profinitely completed version of this invariant vanishes after $\bA^1$-localization. One should contrast this to the situation for algebraic $K$-theory which is also not $\bA^1$-invariant in general but whose $\bA^1$-localization, Weibel's $KH$-theory \cite{Weibel}, is still a vastly interesting invariant (in any characteristic).

\subsection{Summary} The title of this paper is an homage to the results of Geller and Weibel \cite{geller-weibel} in characteristic zero. We give a reformulation of what they did in Theorem~\ref{lem:hh}, which immediately adapts to the contractibility statement for $L_{\bA^1}THH$ in Theorem~\ref{lem:thh}. We then proceed to compute these invariants in characteristic $p$ and later over $\bZ$. 

One would like to prove that the $\bA^1$-localization of $TP$ is zero. The problem, and this is ultimately the technical crux of the present paper, is that the Tate construction does not commute with colimits in general.


 Therefore, the result for $THH$ does not immediately boostrap to $TP$. Instead we exploit the Bhatt-Morrow-Scholze (BMS) filtration \cite{BMS2} and reduce the problem to vanishing statements for $\bA^1$-localization of crystalline (Proposition~\ref{lem:contractible-1}) and, eventually, de Rham cohomology (Lemma~\ref{lem:contractible}). This gives us the vanishing result for $\bA^1$-localization of $TC$ first (Theorem~\ref{thm:tc-zero}) and, later, of TP (Corollary~\ref{lem:tp-zero}). Eventually, we prove profinite vanishing of $L_{\bA^1}TC$ over the integers (Theorem~\ref{thm:vanish}) using the previous results and the observation that $L_{\bA^1}TC$ is a truncating invariant.

\subsection{Vista} This paper is a ``\emph{in vitro} experiment" about the interaction between $\bA^1$-invariance and invariants derived from $THH$: the results, as one can see, are negative. However we do believe that these two perspectives are complementary and can be useful as long as they are not mixed together. In other words, we should separate them.

One concrete way in which ``separating them" is a good idea is the following cartesian square which lets us break down algebraic $K$-theory into constituent pieces (when restricted to noetherian schemes of finite dimension):
\begin{equation} \label{eq:morrow}
\begin{tikzcd}
K \ar{r} \ar{d} & TC \ar{d}\\
L_{\bA^1}K \ar{r} & L_{\mrm{cdh}}TC.
\end{tikzcd}
\end{equation}
Here $L_{\mrm{cdh}}$ is the sheafification functor with respect to Suslin and Voevodsky's cdh topology \cite{sv-chow}. The cartesian-ness of this square is deep: it requires knowing that 1) $L_{\bA^1}K \simeq L_{\mrm{cdh}}K$ which was first proved by Haesemeyer in characteristic zero \cite{Haesemeyer} and by Kerz-Strunk-Tamme in \cite[Theorem 6.3]{KST} in general, and 2) that $K^{\mrm{inv}}$ is a cdh sheaf, which is a result of \cite{gh-negative} over perfect fields and assuming resolution of singularities, and \cite{LT} in general. This last result ultimately boils down to the celebrated theorem of Dundas-Goodwillie-McCarthy \cite{dgm}. 

We see that the key idea is to find a bridge between the homotopy invariant and the trace perspectives --- in this case this is given by cdh-sheafification. We are currently conducting further investigations in and around the square~\eqref{eq:morrow}.

\subsection{Convention} We use standard $\infty$-categorical terminology. A little note on possible confusion: our functor $HH$ is the derived version, i.e., left Kan extended from polynomial algebras. Hence, so are the functors $HC^-, HP, HC$ and so on. Any kind of ``affine line" appearing in this note is the flat affine line so that $\pi_*(R[t]) = \pi_*(R)[t]$, not that there could be another option anyway since we will work with simplicial commutative rings as opposed to $\bE_{\infty}$-ring spectra.

\subsection{Acknowledgements} I would like to thank Joseph Ayoub, Akhil Mathew, Matthew Morrow, Jay Shah, Zijian Yao and Allen Yuan for useful conversations, Benjamin Antieau, Sanath Devalpurkar, Jeremiah Heller, Arpon Raksit, Chuck Weibel and an anonymous referee on comments on an earlier draft and Lars Hesselholt who suggested that the vanishing result over the integers should be true. I would also like to thank Benjamin Antieau, Tom Bachmann, Lars Hesselholt, Marc Hoyois and Matthew Morrow for informing my perspective on ``motives" over the years. Lastly I would like to thank Vitoria the cat for constant distractions.

\section{Topological Hochschild Homology}
\sssec{} Let $R$ be a base (animated) ring and $\CAlg_R$ denote the $\infty$-category of animated $R$-algebras, concretely presented as the $\infty$-category obtained from simplicial commutative $R$-algebras and inverting weak equivalences or the sifted-colimit completion of polynomial $R$-algebras. We have the exact localization endofunctor
\[
L_{\bA^1}:\PSh(\CAlg_R^{\op},\Spt) \rightarrow \PSh(\CAlg_R^{\op},\Spt),
\]
reflecting presheaves of spectra onto $\bA^1$-invariant presheaves: those that convert the canonical map $S \rightarrow S[t]$ to equivalences for all $S \in \CAlg_R$. Also note that since homotopy invariant presheaves are stable under colimits, the endofunctor $L_{\bA^1}$ preserves colimits. A concrete formula for this functor is given object-wise by the formula\footnote{We learned from Weibel some history behind this formula and we take this opportunity to record it. On $\pi_0$, this was introduced by Swan and Gersten in \cite{gersten-h}. The simplicial ring $\bZ[\Delta^{\bullet}]$ was then considered by D. Anderson in \cite{anderson-k}. Weibel took the conceptual leap of taking the geometric realization of $K(R[\Delta^{\bullet}])$ in spectra to construct his $KH$-theory. In his Luminy talk, Suslin then suggested this construction as a recipe for motivic cohomology, details are in a joint paper with Voevodsky \cite{Suslin:1996}. Of course, this construction has since been central to workers in $\bA^1$-homotopy theory as a formula for $\bA^1$-localization, beginning with the introduction of the subject \cite{MV}.}:
\begin{equation} \label{eq:suslin}
(L_{\bA^1}\sF)(S) = \colim_{\Delta^{\op}} \sF(S[\Delta^{\bullet}]).
\end{equation}
One consequence of~\eqref{eq:suslin} is that $L_{\bA^1}$ is strong symmetric monoidal since the (pointwise) symmetric monoidal structure on presheaves of spectra commutes with colimits and the colimit appearing in~\eqref{eq:suslin} is sifted. Therefore $L_{\bA^1}$ preserves algebras and modules over them. 

\ssec{Hochschild homology} We now present the main result of Geller and Weibel's paper \cite[Theorem 2.1]{geller-weibel}; it is morally the same proof as theirs.

\begin{thm}[Geller-Weibel]\label{lem:hh} $L_{\bA^1}$ of $HH, HC$ are zero, while the canonical map $HC^- \rightarrow HP$ is an $L_{\bA^1}$-equivalence.
\end{thm}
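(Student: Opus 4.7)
The plan is to handle the three assertions in sequence; the first is the substantive step, and the other two follow formally from it.

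\textbf{Step 1 ($L_{\bA^1}HH=0$).} By the formula~\eqref{eq:suslin}, it suffices to show that $\colim_{\Delta^{\op}}HH(S[\Delta^{\bullet}])\simeq 0$ for every animated $R$-algebra $S$. My strategy is to commute the colimit past $HH$ and then observe that the realization is already trivial at the level of animated rings. First, the derived $HH$ (left Kan extended from polynomial algebras) preserves sifted colimits, in particular the geometric realization of the simplicial object $[n]\mapsto S[\Delta^n]$. Second, the realization $\colim_{\Delta^{\op}} S[\Delta^{\bullet}]$ is the zero animated $R$-algebra: for the $1$-simplex $t\in S[\Delta^1]=S[t]$ one has $d_0(t)=1$ and $d_1(t)=0$, which forces $1\sim 0$ in $\pi_0$, and any animated ring with vanishing $\pi_0$ is the zero object (the higher $\pi_i$ are modules over $\pi_0$, hence zero). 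Combining these, $(L_{\bA^1}HH)(S)\simeq HH(0)\simeq 0$.

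\textbf{Step 2 ($L_{\bA^1}HC=0$).} The cyclic homology functor is the homotopy orbits $HC\simeq HH_{hS^1}$, which is a colimit. Since $L_{\bA^1}$ is a left adjoint it preserves colimits, so $L_{\bA^1}HC\simeq (L_{\bA^1}HH)_{hS^1}\simeq 0$.

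\textbf{Step 3 ($HC^-\to HP$ is an $L_{\bA^1}$-equivalence).} The cofiber sequence $\Sigma HC\to HC^-\to HP$ coming from the Tate construction for the $S^1$-action on $HH$ is preserved by the left adjoint $L_{\bA^1}$, and by Step 2 its first term becomes $0$; therefore $L_{\bA^1}HC^-\to L_{\bA^1}HP$ is an equivalence.

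The only genuinely non-formal input is the identification in Step 1 of the geometric realization of $[n]\mapsto S[\Delta^n]$ with the zero animated ring; once this is in hand, everything else follows from the left-adjoint nature of $L_{\bA^1}$ together with the standard structural properties of $HH$, $HC$, $HC^-$, and $HP$.
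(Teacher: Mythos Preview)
Your proof is correct. Steps~2 and~3 match the paper's argument exactly (the norm cofiber sequence for $HC^-\to HP$, and commuting $(-)_{hS^1}$ with geometric realization). For Step~1 you take a slightly different route: you push the geometric realization inside $HH$ using that the derived $HH$ preserves sifted colimits, thereby reducing to $HH(0)\simeq 0$. The paper instead computes $\pi_0(L_{\bA^1}HH(S))$ directly as the coequalizer of $\pi_0(S)[t]\rightrightarrows\pi_0(S)$ (using $\pi_0 HH\simeq\pi_0$), observes this is the zero ring, and concludes since all homotopy groups of $L_{\bA^1}HH(S)$ are modules over its $\pi_0$. Both arguments rest on the same elementary fact that identifying $t$ with $0$ and with $1$ kills the ring; you apply it inside $\CAlg_R$ before $HH$, while the paper applies it after. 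Your version uses the sifted-colimit preservation of $HH$, whereas the paper's version only uses the ring structure on $L_{\bA^1}HH(S)$ together with $\pi_0 HH(A)\simeq\pi_0(A)$; this is how the paper immediately transports the argument to $THH$ in Theorem~\ref{lem:thh}, though your approach would work there too since $THH$ likewise commutes with sifted colimits.
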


\begin{proof} The second statement follows from assertion about $HC$ by the norm-cofiber sequence:
\[
\Sigma HC \rightarrow HC^- \xrightarrow{\mrm{can}} HP,
\]
 and the fact that $L_{\bA^1}$ is exact.

To prove the assertion about $HC$, we first prove the assertion about $HH$. The homotopy groups of the spectrum\footnote{Actually, in this case, an object of the derived $\infty$-category of abelian groups.} $(L_{\bA^1}HH)(S)$ are modules over the ring
\[
\pi_0((L_{\bA^1}HH)(S) \simeq \mrm{Coeq}(\pi_0(HH(S[t])) = \pi_0(S)[t] \rightrightarrows \pi_0(HH(S))=\pi_0(S)).
\] This ring is actually zero since the coequalizer instructs us to set $t=0=1$. Therefore since the homotopy groups of $L_{\bA^1}HH(S)$ are modules over the zero ring, they are all zero and hence the spectrum itself is contractible.

Now, we need to show that
\[
L_{\bA^1}(HC(-)) \simeq 0.
\]
Evaluating this on a $S \in \CAlg_R$, we need to compute the geometric realization of the simplicial object 
\[
n \mapsto HH(S[\Delta^{n}])_{hS^1}.
\] To compute this, note that the diagram 
\[
n \mapsto HH(S[\Delta^n])
\]
is a diagram in $S^1$-spectra since the transition maps are induced by ring maps, whence the geometric realization is an $S^1$-spectrum. Therefore from the fact that geometric realizations and taking $S^1$-orbits commute (they are both colimits) we get that 
\[
|HH(S[\Delta^{\bullet}])_{hS^1}| \simeq |HH(S[\Delta^{\bullet}])|_{hS^1} \simeq |0|_{hS^1} \simeq 0.
\]
\qedhere\end{proof}

\sssec{} Suppose for a moment that $R = \bQ$, hence we are looking at animated rings in characteristic zero. In this situation a result of Kassel \cite[Corollary 3.12]{kassel} proves that 
\[
L_{\bA^1}HP \simeq HP,
\] by way of comparison with de Rham cohomology (see also \cite[Theorem III.5.1]{goodwillie-loopspace} for a direct proof of a more general ``homotopy invariance statement"). As a result we deduce \cite[Theorem 4.1]{geller-weibel}:
\begin{cor}[Geller-Weibel]\label{cor:gw} Let $R$ be a ring of characteristic zero. Then we have a canonical equivalence 
\[
(L_{\bA^1}HC^-)(R) \simeq HP(R).
\]
\end{cor}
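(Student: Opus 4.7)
The plan is to combine \thmref{lem:hh} with Kassel's homotopy invariance for $HP$ in characteristic zero, which is stated just above the corollary.

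First, I would apply \thmref{lem:hh}: since that theorem asserts that the canonical map $HC^- \to HP$ is an $L_{\bA^1}$-equivalence, applying $L_{\bA^1}$ yields a canonical equivalence
\[
L_{\bA^1}HC^- \isoto L_{\bA^1}HP
\]
of presheaves of spectra on $\CAlg_R$. (This is where the vanishing of $L_{\bA^1}HC$, established in the proof of \thmref{lem:hh}, is being used, via the norm-cofiber sequence $\Sigma HC \to HC^- \to HP$ and exactness of $L_{\bA^1}$.)

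Next, I would invoke Kassel's result \cite[Corollary 3.12]{kassel}, recalled in the paragraph preceding the corollary, which states that in characteristic zero the presheaf $HP$ is already $\bA^1$-invariant, i.e.\ the unit map $HP \to L_{\bA^1}HP$ is an equivalence. Evaluating the chain of equivalences
\[
HP \isofrom L_{\bA^1}HP \isofrom L_{\bA^1}HC^-
\]
at $R$ gives the desired canonical identification $(L_{\bA^1}HC^-)(R) \simeq HP(R)$.

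Since both key inputs are already in hand, no real obstacle remains; the only thing worth noting is that the characteristic zero hypothesis enters \emph{only} through Kassel's theorem (the $\bA^1$-invariance of $HP$), while the identification $L_{\bA^1}HC^- \simeq L_{\bA^1}HP$ is valid over any base $R$. The rest of the paper is devoted to replacing Kassel's characteristic zero input with suitable (and much harder) positive characteristic and mixed characteristic statements about $L_{\bA^1}$ of de Rham and crystalline cohomology, in order to obtain analogues of this corollary for $TP$, $TC^-$, and $TC$.
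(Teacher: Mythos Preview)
Your proof is correct and is exactly the argument the paper has in mind: the corollary is stated immediately after recalling Kassel's $\bA^1$-invariance of $HP$ in characteristic zero, with the words ``As a result we deduce,'' so the intended proof is precisely to combine \thmref{lem:hh} (giving $L_{\bA^1}HC^- \simeq L_{\bA^1}HP$) with Kassel's equivalence $L_{\bA^1}HP \simeq HP$.
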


\begin{rem} \label{rem:assoc} The results of \cite{geller-weibel} also works in the generality of non-commutative rings. We do recover their results in this generality as well; see Remark~\ref{rem:ncworld}.
\end{rem}

\ssec{Topological Hochschild homology} The story for $THH$ is similar and the proof of Theorem~\ref{lem:hh} goes through in the topological setting.

\begin{thm} \label{lem:thh} $L_{\bA^1}$ of $THH$ and $THH_{hS^1}$ are zero. On the other hand, the canonical map $\mrm{can}:TC^- \rightarrow TP$ is an $L_{\bA^1}$-equivalence.
\end{thm}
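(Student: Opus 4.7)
The plan is to mirror the proof of \thmref{lem:hh} in the topological setting; the excerpt already signals as much, and no genuinely new ingredients should be required beyond standard structural facts about $THH$.

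First I would establish $L_{\bA^1}THH = 0$ by imitating the $HH$ argument. Since $L_{\bA^1}$ is strong symmetric monoidal, $L_{\bA^1}THH(S)$ inherits an $\Einfty$-ring structure, so its homotopy groups are modules over $\pi_0$. Using the Suslin formula~\eqref{eq:suslin} together with the identification $\pi_0 THH(A) = \pi_0(A)$ for any animated commutative ring $A$, one identifies $\pi_0 L_{\bA^1}THH(S)$ with the coequalizer of $\pi_0(S)[t] \rightrightarrows \pi_0(S)$ induced by $t\mapsto 0$ and $t\mapsto 1$. This forces $0=1$, so $\pi_0$ vanishes and hence the whole spectrum is contractible.

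Next I would copy the $HC$ half of the argument to handle $THH_{hS^1}$. The transition maps in the simplicial object $n \mapsto THH(S[\Delta^n])$ are induced by ring maps and are therefore $S^1$-equivariant, so its realization is naturally an $S^1$-spectrum. Since realization and $(-)_{hS^1}$ are both colimits and $L_{\bA^1}$ commutes with colimits, one then gets
\[
L_{\bA^1}(THH_{hS^1})(S) \simeq |THH(S[\Delta^\bullet])_{hS^1}| \simeq |THH(S[\Delta^\bullet])|_{hS^1} \simeq (L_{\bA^1}THH(S))_{hS^1} \simeq 0.
\]

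Finally, for the claim about $\mrm{can}$, I would apply the exact functor $L_{\bA^1}$ to the $S^1$-Tate fiber sequence
\[
\Sigma THH_{hS^1} \too TC^- \xrightarrow{\mrm{can}} TP,
\]
and invoke the previous step to conclude. The closest thing to a real obstacle is verifying the standard structural facts about $THH$ on animated commutative rings that are used silently above: its $\Einfty$-ring structure, the identification $\pi_0 THH = \pi_0$, and the $S^1$-Tate fiber sequence. All of these are available in the Nikolaus--Scholze framework, so the proof is essentially a direct transcription of the $HH/HC$ argument.
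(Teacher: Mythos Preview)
Your proposal is correct and follows exactly the approach the paper takes: it simply notes that $\pi_0 THH(S) \simeq \pi_0 HH(S) \simeq \pi_0(S)$ and then says ``the same argument follows through,'' which is precisely the transcription of the $HH/HC$ argument you have written out in detail. Your expanded version is a faithful unpacking of the paper's one-line proof.
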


\begin{proof} Indeed, the key observation of Theorem~\ref{lem:hh} is that $\pi_0(\bL_{\bA^1}HH(S)) \simeq 0$. But then $\pi_0(THH(S)) \simeq \pi_0(HH(S)) \simeq \pi_0(S)$ and the same argument follows through.
\end{proof}

\section{Characteristic $p$}  \label{sect:charp}

Suppose now that $R$ is an animated ring of characteristic $p > 0$ (in other words $R \in \CAlg_{\bF_p}$). 

\sssec{} We begin with a discussion of $L_{\bA^1}$. We will be looking at functors $\CAlg_R \rightarrow \sC$ where $\sC$ is the derived $\infty$-category of some abelian category; most likely it will be $\mbf{D}(\bZ_p)$ or the $\infty$-category of spectra. These functors will land inside the more manageable category of ``derived $p$-complete objects" denoted by $\sC_p^{\comp}$ which admits a completion functor $\sC \rightarrow \sC_p^{\comp}$, an exact left adjoint\footnote{Hence a localization.}. In general, the composite of these functors with the inclusion back to $\sC$ (i.e. the localization endofunctor) does not preserve colimits; equivalently $p$-complete objects are not stable under colimits; see \cite[Tag 0ARC]{stacks} for an example in the case of $\sC = \mbf{D}(\bZ_p)$. 

\sssec{} Fortunately, many of our invariants are bounded below and the $\bA^1$-localization functor is computed by a geometric realization~\eqref{eq:suslin}. 

\begin{lem}\label{lem:geom-rel} If $R$ is a connective $\bE_2$-ring, $I \subset \pi_0(R)$ a finitely generated ideal. Let $M_{\bullet}$ be a simplicial object in $R$-modules which are uniformly bounded below with respect to the standard $t$-structure. Then if each $M_n$ is $I$-complete, so is $|M_{\bullet}|$. 
\end{lem}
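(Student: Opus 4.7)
The plan is to reduce to the case of a principal ideal, then to check $I$-completeness on homotopy groups, exploiting the uniform boundedness to make the geometric realization tractable via a convergent spectral sequence.

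First I would reduce to the case $I = (f)$. Since a module is derived $(f_1,\ldots,f_r)$-complete iff it is derived $(f_i)$-complete for each generator, and the hypotheses on $M_\bullet$ descend to each $f_i$, we may assume $I = (f)$ is principal.

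Next I would invoke the standard characterization: a bounded-below object $X \in \mathbf{D}(R)$ is derived $f$-complete if and only if each homotopy group $\pi_i X$ is derived $f$-complete as a classical $\pi_0(R)$-module. One direction is the Postnikov-tower argument, using that derived $f$-complete modules are closed under extensions; the other uses the long exact sequences obtained from $\mathrm{RHom}(R[1/f],-)$ and the convergent Postnikov tower. Since $|M_\bullet|$ inherits a uniform lower bound from the $M_n$, it suffices to show each $\pi_k|M_\bullet|$ is derived $f$-complete.

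To compute $\pi_k|M_\bullet|$, I would use the standard spectral sequence associated to the skeletal filtration of the geometric realization,
\[
E^1_{p,q} \;=\; \pi_q M_p \;\Longrightarrow\; \pi_{p+q}|M_\bullet|,
\]
with differentials the alternating sum of face maps. The uniform bound $M_p \in \mathbf{D}^{\geq d}(R)$ forces $E^1_{p,q} = 0$ for $q < d$, so for each total degree $k$ only the finitely many columns with $0 \le p \le k - d$ contribute. The spectral sequence therefore converges strongly, and $\pi_k|M_\bullet|$ admits a finite filtration whose associated graded pieces are subquotients of the $\pi_q M_p$. By the homotopy-group characterization applied to each $M_p$, every $\pi_q M_p$ is derived $f$-complete. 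Since the full subcategory of derived $f$-complete $\pi_0(R)$-modules is abelian and closed under kernels, cokernels, and extensions, $\pi_k|M_\bullet|$ is derived $f$-complete, and the argument concludes.

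The main obstacle I expect is verifying carefully the characterization of $f$-completeness via homotopy groups in the generality of an arbitrary connective $\bE_2$-ring $R$ and $f \in \pi_0 R$ (as opposed to the familiar case of $\mathbf{D}(\mathbf{Z})$ with $f = p$); this is standard but requires a small Koszul-complex argument and the identification of derived $f$-complete $\pi_0(R)$-modules as the heart of a Bousfield localization inside $\mathrm{Mod}_{\pi_0 R}$.
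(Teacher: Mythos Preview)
Your argument is correct and essentially the same as the paper's: both reduce $I$-completeness to a condition on homotopy groups and then use uniform boundedness to see that each $\pi_k|M_\bullet|$ is controlled by finitely many $M_n$. The paper's version is slightly terser---it cites \cite[Theorem~7.3.4.1]{SAG} for the homotopy-group criterion (which dispatches the obstacle you flag at the end) and observes that $\pi_k|M_\bullet|$ coincides with $\pi_k$ of a finite skeleton, a finite colimit of $I$-complete modules and hence $I$-complete, rather than unpacking the skeletal spectral sequence.
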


\begin{proof} We might as well assume that each $M_n$ is connective. We use the criterion (b)$\Rightarrow$(a) in \cite[Theorem 7.3.4.1]{SAG}. Indeed, it suffices to prove that $\mrm{Ext}^i_{\pi_0(R)}(\pi_0(R)[x^{-1}], \pi_k(|M_{\bullet}|)) = 0$ for $i = 0,1$. But then, since each $M_n$ is connective, $\pi_k(|M_{\bullet}|)$ on depends on a finite skeleton of $M_{\bullet}$ which is a finite colimit, whence $I$-complete. The result follows from the other direction of \cite[Theorem 7.3.4.1]{SAG}.
\end{proof}
%
%

In this light, our $L_{\bA^1}$-functor will regarded as in the previous section. In other words we are still studying the effects of the endofunctor
\[
L_{\bA^1}: \PSh(\CAlg_R^{\op},\sC) \rightarrow \PSh(\CAlg_R^{\op},\sC),
\]
and indicate so when an equivalence is only known after $p$-completion by writing $L_{\bA^1}(X) \simeq_{p} L_{\bA^1}(Y)$.

%

\ssec{Topological cyclic homology} With this technical discussion out of the way, we prove

\begin{thm} \label{thm:tc-zero} If $R \in \CAlg_{\bF_p}$, then $(L_{\bA^1}TC)(R) \simeq 0$.
\end{thm}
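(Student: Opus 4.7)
The strategy is to combine the Nikolaus--Scholze fiber sequence
\[
TC(R) \to TC^-(R) \xrightarrow{\phi - \mrm{can}} TP(R)
\]
with the Bhatt--Morrow--Scholze motivic filtration on $TC^-$ and $TP$, reducing the vanishing of $L_{\bA^1}TC(R)$ to a vanishing statement for $L_{\bA^1}$ applied to crystalline cohomology in characteristic $p$. Since $L_{\bA^1}$ is exact, it preserves this fiber sequence, so it suffices to prove $L_{\bA^1}TC^-(R)\simeq 0$ and $L_{\bA^1}TP(R)\simeq 0$. Because $R \in \CAlg_{\bF_p}$, the spectrum $THH(R)$ is a $THH(\bF_p)$-module and hence $p$-torsion, so $TC^-(R)$ and $TP(R)$ are $p$-complete and rationally trivial; the computation therefore happens $p$-adically.

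I would then apply the BMS filtration to $TC^-(R)$ and $TP(R)$. After $p$-completion, the associated graded pieces are
\[
\on{gr}^i_{\mrm{BMS}} TP(R) \simeq R\Gamma_{\mrm{crys}}(R)[2i], \qquad \on{gr}^i_{\mrm{BMS}} TC^-(R) \simeq \sN^{\ge i} R\Gamma_{\mrm{crys}}(R)[2i],
\]
where $\sN^{\ge i}$ denotes the Nygaard filtration. Since $L_{\bA^1}$ is exact and commutes with colimits, it also commutes with the formation of associated gradeds. Then by the announced \propref{lem:contractible-1}---which itself rests on the vanishing of $L_{\bA^1}$ of de Rham cohomology in characteristic $p$ (\lemref{lem:contractible})---$L_{\bA^1}$ annihilates both $R\Gamma_{\mrm{crys}}(R)$ and each of its Nygaard pieces. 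Thus every associated graded of $L_{\bA^1}TC^-(R)$ and $L_{\bA^1}TP(R)$ vanishes.

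The delicate step---and the principal obstacle---is to pass from the vanishing of the associated gradeds back to the vanishing of the totals. This is exactly the issue flagged in the introduction: the Tate construction does not commute with colimits, so the direct route from $L_{\bA^1}THH \simeq 0$ (\thmref{lem:thh}) to $L_{\bA^1}TP \simeq 0$ is blocked. The BMS filtration circumvents this by letting us compute $L_{\bA^1}$ on the well-behaved crystalline graded pieces instead of on the Tate construction itself. To assemble the final vanishing, I would use that each weight of the BMS filtration is bounded below with a complete filtration, and invoke \lemref{lem:geom-rel} to ensure that the geometric realization computing $L_{\bA^1}$ in~\eqref{eq:suslin} preserves the requisite $p$-completeness. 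With all graded pieces annihilated and the filtrations complete, $L_{\bA^1}TC^-(R)$ and $L_{\bA^1}TP(R)$ themselves vanish $p$-adically, and the fiber sequence delivers $L_{\bA^1}TC(R)\simeq 0$.
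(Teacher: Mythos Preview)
Your strategy has a genuine gap at precisely the ``delicate step.''  You plan to prove $L_{\bA^1}TC^-(R)\simeq 0$ and $L_{\bA^1}TP(R)\simeq 0$ separately by killing the BMS associated gradeds and then reassembling.  The reassembly requires that the BMS filtration \emph{remain complete after applying $L_{\bA^1}$}, i.e.\ that $\lim_i L_{\bA^1}\mrm{Fil}^{\ge i}\simeq 0$.  You appeal to \lemref{lem:geom-rel} for this, but that lemma concerns $p$-completeness of a single spectrum under geometric realization; it says nothing about completeness of an $\bN$- or $\bZ$-indexed filtration, which is an inverse-limit condition of an entirely different nature.  For $TC^-$ and $TP$ the filtered pieces are \emph{not} increasingly connective (e.g.\ $TC^-(\bF_p)$ has nonzero $\pi_{-2n}$ for all $n\ge 0$, and the $TP$ filtration is periodic), so there is no evident reason the limit vanishes after $L_{\bA^1}$.

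The paper sidesteps this by running the BMS filtration on $TC$ itself rather than on $TC^-$ and $TP$.  The key input is the connectivity estimate $\mrm{Fil}^{\ge i}_{BMS}TC(R)$ is $(i-1)$-connective for smooth $R$ (from \cite{AMMN}); since $L_{\bA^1}$ is a sifted colimit over polynomial rings, $L_{\bA^1}\mrm{Fil}^{\ge i}_{BMS}TC(\bF_p)$ is still $(i-1)$-connective, and so its inverse limit over $i$ is zero.  This is exactly why the filtration stays complete after $L_{\bA^1}$ for $TC$ but not for $TP$---as the paper remarks, ``the point of going to $TC$ \dots\ is that the diagram is $\bN^{\op}$-indexed so we can do some sort of induction.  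This is not the case for $TP$.''  Indeed, in the paper $L_{\bA^1}TP\simeq_p 0$ is deduced \emph{from} Theorem~\ref{thm:tc-zero} (Corollary~\ref{lem:tp-zero}), not used as an input to it.  Your logical order is therefore inverted, and the direct attack on $TP$ is exactly the obstruction the paper is designed to route around.
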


This implies, in particular, that the functor
\[
L_{\bA^1}TC:\CAlg_R \rightarrow \Spt,
\]
is zero for any $R \in  \CAlg_{\bF_p}$. 

\sssec{} We begin with some preliminaries. Let $R$ be a smooth (or, more generally, quasisyntomic \cite[Definition 4.10]{BMS2}) $k$-algebra where $k$ is a perfect field of characteristic $p$. Then, Bhatt-Morrow-Scholze constructed in \cite{BMS2} complete, multiplicative descending filtrations
\begin{equation} \label{eq:tp}
\mrm{Fil}_{BMS}^{\ast}TC(R) \rightarrow TC(R) \qquad \mrm{Fil}_{BMS}^{\ast}TC^-(R) \rightarrow TC^-(R) \qquad \mrm{Fil}_{BMS}^{\ast}TP(R) \rightarrow TP(R),
\end{equation} 
and also identified the associated graded pieces. 

\sssec{} Since we have restricted ourselves to characteristic $p$ (as opposed to mixed characteristic settings), the answers are particularly nice. For example we have \cite[Theorems 1.10, 1.12]{BMS2}
\[
\mrm{gr}^0_{BMS}TC^-(R) \simeq R\Gamma_{\mrm{crys}}(R/W(k)),
\]
which periodizes in $TP$ to
\[
\mrm{gr}^q_{BMS}TP(R) \simeq R\Gamma_{\mrm{crys}}(R/W(k))[2q].
\]
Here $R\Gamma_{\mrm{crys}}(R/W(k))$ is the (object in $\mbf{D}(W(k))$ computing) crystalline cohomology of $R$; note that there is a canonical representative of this object in the $\mbf{D}(W(k))$ given by the de Rham-Witt complex of Bloch-Deligne-Illusie \cite{illusie-drw}:
\[
W\Omega^{\bullet}_R = W\Omega^0_R \rightarrow W\Omega^1_R \rightarrow \cdots.
\]
\sssec{} Now, we extend the de Rham-Witt complex (and hence also crystalline cohomology) to an arbitrary animated ring by left Kan extension, whence we consider the object
\[
LW\Omega^{\bullet} \in \PSh(\CAlg_R^{\op},\mbf{D}(W(k))_p^\comp),
\]
and the localization thereof. First, we study the analogous question for the derived de Rham complex:
\[
L\Omega^{\bullet} \in \PSh(\CAlg_R^{\op},\mbf{D}(k)).
\]

\begin{lem} \label{lem:contractible} Let $k$ be a perfect field of characteristic $p$, then $L_{\bA^1}L\Omega^{\bullet} \simeq 0$.
\end{lem}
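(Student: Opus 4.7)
The strategy is to filter $L\Omega^{\bullet}$ by the conjugate filtration and exploit that $L_{\bA^1}$ commutes with colimits. Recall that in characteristic $p$, for any (animated) $k$-algebra $S$, the complex $L\Omega^{\bullet}_{S/k}$ carries a natural exhaustive increasing conjugate filtration $\mrm{Fil}^{\mrm{conj}}_{\leq \star}$ whose associated graded pieces are
\[
\mrm{gr}^{\mrm{conj}}_n L\Omega^{\bullet}_{S/k} \simeq (\wedge^n L_{S^{(1)}/k})[-n],
\]
where $S^{(1)}$ denotes the Frobenius twist over $k$. This is the derived version of Cartier's isomorphism $H^n(\Omega^{\bullet}_{S/k}) \simeq \Omega^n_{S^{(1)}/k}$ for smooth $S$, extended to animated rings by left Kan extension.

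Since $L_{\bA^1}$ preserves colimits and the conjugate filtration is exhaustive (so $L\Omega^{\bullet} = \colim_n \mrm{Fil}^{\mrm{conj}}_{\leq n} L\Omega^{\bullet}$), it suffices to prove that $L_{\bA^1}$ kills each graded piece $(\wedge^n L_{(-)^{(1)}/k})[-n]$.

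Now, each such graded piece is (functorially in $S$) a module, in $\mbf{D}(k)$, over the presheaf of animated commutative rings $S \mapsto S^{(1)}$. Since $L_{\bA^1}$ is symmetric monoidal, it preserves algebras and modules over them, so it suffices to show that the presheaf $S \mapsto S^{(1)}$ becomes zero after $L_{\bA^1}$. For this we replay the argument of \thmref{lem:hh}: the object $L_{\bA^1}(S \mapsto S^{(1)})$ is itself a presheaf of animated commutative rings, and its $\pi_0$ evaluated at $R$ is the coequalizer in rings of $R^{(1)}[t] \rightrightarrows R^{(1)}$ obtained from $t \mapsto 0$ and $t \mapsto 1$, which forces $0 = 1$ and so is the zero ring. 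All higher homotopy groups then vanish, since they are modules over the zero $\pi_0$. Hence $L_{\bA^1}(S \mapsto S^{(1)}) \simeq 0$, and so each graded piece of the conjugate filtration is a module over a vanishing algebra after $L_{\bA^1}$, hence itself vanishes.

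The main obstacle is not conceptual but the identification of the conjugate filtration on the (left-Kan-extended) non-Hodge-completed derived de Rham complex in characteristic $p$; once that structural input is in hand, everything else reuses mechanisms already established: exhaustion plus commutation of $L_{\bA^1}$ with colimits, symmetric monoidality of $L_{\bA^1}$, and the ``$1 = 0$'' trick from \thmref{lem:hh}.
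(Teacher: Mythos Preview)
Your argument is correct, but it takes a somewhat more elaborate route than the paper's. The paper proceeds by a single module reduction: since $L_{\bA^1}L\Omega^{\bullet}(R)$ is a module over $L_{\bA^1}L\Omega^{\bullet}(k)$, it suffices to kill the latter; and $(L_{\bA^1}L\Omega^{\bullet})(k)$ is in turn a module over its own $H^0$, which is the coequalizer of
\[
H^0(\Omega^{\bullet}_{k[T]}) \simeq k[T^p] \rightrightarrows H^0(\Omega^{\bullet}_k) \simeq k
\]
under $T \mapsto 0,1$, hence zero. So the paper only needs the degree-zero Cartier computation on the affine line, whereas you invoke the full conjugate filtration and then run the ``$1=0$'' trick on the Frobenius-twisted structure presheaf $S \mapsto S^{(1)}$. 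Both arguments bottom out in the same mechanism; the paper's is shorter and needs less structural input, while yours makes the role of the Cartier isomorphism more transparent and would adapt more readily to settings where one wants to control individual weight pieces.
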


\begin{proof} Since $L_{\bA^1}L\Omega^{\bullet}(R)$ is a module over $L_{\bA^1}L\Omega^{\bullet}(k)$, it suffices to prove the following result:
\[
(L_{\bA^1}L\Omega^{\bullet})(k) \simeq \colim_{\Delta^{\op}} \Omega^{\bullet}_{k[\Delta^{\bullet}]} \simeq 0.
\]
Indeed, the above object is a module over $H^0(L_{\bA^1}\Omega^{\bullet}(k))$ which is the coequalizer of
\begin{equation} \label{eq:dr}
H^0(\Omega^{\bullet}_{k[T]}) \simeq k[T^p] \rightrightarrows H^0(\Omega^{\bullet}_{k}) \simeq k,
\end{equation}
where one of the maps sends $T^p$ to $0$ and the other to $1$. Therefore the coequalizer is the zero ring.
\end{proof}

\begin{rem} \label{rem:char0} The vanishing phenomenon described in this paper can be attributed to the fact that $H^0(\Omega^{\bullet}_{k[T]})$ is rather large in characteristic $p$. In contrast, in the presence of $\bA^1$-invariance, i.e. in characteristic zero, this group is just the base field $k$. In this case, the equalizer~\eqref{eq:dr} reads as
\[
\id,\id: k \rightrightarrows k
\]
\end{rem}

\sssec{} This gives the next vanishing result for the derived de Rham Witt complex/crystalline cohomology. This vanishing is way more than we need and is of independent interest

\begin{prop} \label{lem:contractible-1} Let $k$ be a perfect field of characteristic $p$, then $L_{\bA^1}LW\Omega^{\bullet} \simeq_p 0$, i.e., is zero after $p$-completion.
\end{prop}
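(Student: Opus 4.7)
The plan is to reduce to Lemma~\ref{lem:contractible} by working modulo $p$. The key input is the classical comparison: for a smooth (in particular polynomial) $k$-algebra $R$, one has
\begin{equation*}
W\Omega^{\bullet}_R \otimes^L_{W(k)} k \simeq \Omega^{\bullet}_{R/k},
\end{equation*}
i.e., crystalline cohomology modulo $p$ recovers de Rham cohomology. Since both sides are left Kan extended from polynomial $k$-algebras by definition, this promotes to a levelwise equivalence of presheaves $LW\Omega^{\bullet} \otimes^L_{W(k)} k \simeq L\Omega^{\bullet}$ on $\CAlg_R$.

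Next, because the base change $(-) \otimes^L_{W(k)} k$ is itself a colimit and $L_{\bA^1}$ preserves colimits (indeed, it is computed by the sifted colimit formula~\eqref{eq:suslin}), these two operations commute. Combining with the identification above and Lemma~\ref{lem:contractible}, I would compute
\begin{equation*}
\bigl(L_{\bA^1}(LW\Omega^{\bullet})\bigr) \otimes^L_{W(k)} k \simeq L_{\bA^1}\bigl(LW\Omega^{\bullet} \otimes^L_{W(k)} k\bigr) \simeq L_{\bA^1}(L\Omega^{\bullet}) \simeq 0.
\end{equation*}

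To conclude, I would invoke derived Nakayama in the form: for any $X \in \mbf{D}(W(k))$, the $p$-completion $X^{\wedge}_p$ vanishes if and only if $X \otimes^L_{W(k)} k \simeq 0$ (the tensor product does not see the difference between $X$ and $X^{\wedge}_p$, and $p$-complete objects with vanishing mod-$p$ reduction are zero). Applied pointwise to the presheaf $L_{\bA^1}(LW\Omega^{\bullet})$, this gives $L_{\bA^1}(LW\Omega^{\bullet}) \simeq_p 0$.

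The main subtlety I anticipate is the one already flagged in the text before Lemma~\ref{lem:geom-rel}: $L_{\bA^1}$ need not preserve $p$-completeness, so one cannot na\"ively assert that $L_{\bA^1}(LW\Omega^{\bullet})$ is $p$-complete and then appeal to vanishing mod $p$ directly. The virtue of the derived Nakayama formulation above is that it compares $p$-completions through their mod-$p$ reductions without requiring $L_{\bA^1}(LW\Omega^{\bullet})$ itself to be $p$-complete, thereby bypassing the obstacle cleanly.
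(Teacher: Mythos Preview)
Your proposal is correct and is essentially the same argument as the paper's: both reduce modulo $p$ (your $\otimes^L_{W(k)} k$ is exactly the paper's $/p$), commute this with $L_{\bA^1}$ using that the latter preserves colimits, invoke the crystalline/de~Rham comparison to identify the result with $L_{\bA^1}L\Omega^{\bullet}$, and apply Lemma~\ref{lem:contractible}. The paper compresses all of this into a single displayed line, leaving the derived Nakayama step implicit in the notation $\simeq_p$, whereas you spell it out explicitly; but the content is identical.
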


\begin{proof} This follows from Lemma~\ref{lem:contractible} since 
\[
(L_{\bA^1}LW\Omega^{\bullet})/p \simeq L_{\bA^1}(LW\Omega^{\bullet}/p) \simeq L_{\bA^1}L\Omega^{\bullet} \simeq 0.
\]
\end{proof}

\sssec{} We note that the arguments in Proposition~\ref{lem:contractible-1} and Lemma~\ref{lem:contractible} also prove that the non-derived version of $\bA^1$-invariant crystalline and de Rham cohomology (evaluated on a discrete commutative ring) also vanish as they are modules over their values on $\bF_p$ where the derived and non-derived versions coincide.

\sssec{} We now finish the proof of the main result of this section.

\begin{proof}[Proof of Theorem~\ref{thm:tc-zero}]  It suffices to prove that $(L_{\bA^1}TC)(\bF_p)$ is zero. We employ the BMS filtration on $TC$. First, according to \cite[Theorem 5.1(1)]{AMMN} for a smooth $\bF_p$-algebra $R$, $\mrm{Fil}_{BMS}^{\geq i}TC(R)$ is $(i\,-\,1)$-connective. Since the terms in the colimit computing $(L_{\bA^1}\mrm{Fil}^{\geq i}_{BMS}TC)(\bF_p)$ are all smooth $\bF_p$-algebras, this spectrum is again at least $(i\,-\,1)$-connective. Therefore, as connectivity tends to $\infty$, we get that $\lim_i (L_{\bA^1}\mrm{Fil}^{\geq i}_{BMS}TC)(\bF_p) \simeq 0$ and so the $L_{\bA^1}$-BMS filtration is complete on $TC(\bF_p)$. 

To leverage this fact, we note that we have a diagram of presheaves of spectra where each column is a cofiber sequence
\begin{equation} \label{eq:tc-diagram}
\begin{tikzcd}
\ar{r} \cdots & \mrm{Fil}^{\geq 2}_{BMS}TC \ar{r} \ar{d} & \mrm{Fil}^{\geq 1}_{BMS}TC \ar{d} \ar{r} & \mrm{Fil}^{\geq 0}_{BMS}TC \ar{d}\\
\ar{r} \cdots & TC \ar{r} \ar{d} & TC \ar{r} \ar{d} & TC \ar{d} \\
\ar{r} \cdots & \mrm{Fil}^{BMS}_{< 2}TC \ar{r}  & \mrm{Fil}_{< 1}^{BMS}TC \ar{r} & \mrm{Fil}_{< 0}^{BMS}TC \simeq \mrm{gr}^0_{BMS}TC\\
\end{tikzcd}
\end{equation}
which induces the same diagram after applying $L_{\bA^1}$ again with column-wise cofiber sequences. We have proved that taking limit along the top row results in a contractible spectrum. Therefore, to prove that the middle term is contractible, we only need to prove that 
\[
(L_{\bA^1}\mrm{gr}^i_{\mrm{BMS}}TC)(\bF_p) \simeq 0
\]
for all $i \geq 0$. Since the BMS filtration is multiplicative, the (presheaf of) graded $\bE_{\infty}$-algebra(s) $\oplus \mrm{gr}^i_{BMS}TC$ is naturally a (presheaf of) $\bE_{\infty}$-algebra(s) over $\mrm{gr}^0_{BMS}TC$. Ditto their $L_{\bA^1}$-versions. Hence we only need to prove that the zero-th graded pieces vanish: $(L_{\bA^1}\mrm{gr}^0_{BMS}TC)(\bF_p) \simeq 0$. 

By the Nikolaus-Scholze formula for $TC$ \cite{nikolaus-scholze}, have a cofiber sequence of presheaves of spectra:
\[
\mrm{gr}^0_{BMS}TC \rightarrow \mrm{gr}^0_{BMS}TC^{-} \xrightarrow{\mrm{can} - \phi_p} \mrm{gr}^0_{BMS}TP,
\]
where the $\mrm{can}$ map is an equivalence in this case. Therefore, after identifying both $\mrm{gr}_{BMS}^0TP$ and $\mrm{gr}_{BMS}^0TC^-$  with crystalline cohomology on smooth $k$-schemes (we only need this for polynomial rings) we get a cofiber sequence
\[
L_{\bA^1}\mrm{gr}^0_{BMS}TC \rightarrow L_{\bA^1}R\Gamma(-/W(\bF_p)) \xrightarrow{\id - \phi_p} L_{\bA^1}R\Gamma(-/W(\bF_p)).
\]
where $\phi_p$ is the composite of $\phi_p$ with the inverse of $\mrm{can}$. Evaluating the above on $\bF_p$ and using Proposition~\ref{lem:contractible-1}, we conclude the desired result after $p$-completion. Since the terms in $TC(\bF_p[\Delta^{\bullet}])$ are all uniformly bounded below, we conclude by Lemma~\ref{lem:geom-rel}, that $L_{\bA^1}TC(\bF_p)$ is already $p$-complete and hence actually zero.

\end{proof}

\ssec{Topological periodic cyclic homology} The point of going to $TC$ in the above argument is that the diagram~\eqref{eq:tc-diagram} is $\bN^{\op}$-indexed so we can do some sort of induction. This is not the case for $TP$. We use the above result to deduce vanishing for $TP$ after $p$-completion:
 
 \begin{cor} \label{lem:tp-zero} $L_{\bA^1}TP \simeq_p 0$.
 \end{cor}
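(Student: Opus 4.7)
The plan is to bootstrap Corollary~\ref{lem:tp-zero} from Theorem~\ref{thm:tc-zero} via Theorem~\ref{lem:thh}. Since $L_{\bA^1}(\mrm{can}): L_{\bA^1}TC^- \isoto L_{\bA^1}TP$ by Theorem~\ref{lem:thh}, it suffices to show $L_{\bA^1}TC^- \simeq_p 0$. Applying $L_{\bA^1}$ to the Nikolaus--Scholze fiber sequence $TC \to TC^- \xrightarrow{\mrm{can}-\phi_p} TP$ and invoking Theorem~\ref{thm:tc-zero} already furnishes a \emph{second} $L_{\bA^1}$-equivalence $L_{\bA^1}(\mrm{can}-\phi_p)$ between $L_{\bA^1}TC^-$ and $L_{\bA^1}TP$; but having two equivalences does not by itself force vanishing, so additional input is required.

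The strategy for this additional input is to replay the argument of Theorem~\ref{thm:tc-zero} on $TC^-$ in place of $TC$. Although the BMS filtration on $TC^-$ is \emph{a priori} $\mbf{Z}$-indexed (not $\mbf{N}^{\op}$-indexed), it splits via the cofiber sequence $\mrm{Fil}^{\geq 0}_{BMS}TC^- \to TC^- \to TC^-/\mrm{Fil}^{\geq 0}_{BMS}TC^-$. The positive piece $\mrm{Fil}^{\geq 0}_{BMS}TC^-$ has an $\mbf{N}^{\op}$-indexed subfiltration with graded pieces $\mrm{gr}^i_{BMS}TC^- \simeq \mrm{Fil}^{\geq i}_{\mrm{Nyg}}R\Gamma_{\mrm{crys}}(-/W(\bF_p))[2i]$ for $i \geq 0$; these vanish after $L_{\bA^1}$ and $p$-completion --- the unfiltered piece directly by Proposition~\ref{lem:contractible-1}, and the Nygaard-filtered subcomplexes by combining Proposition~\ref{lem:contractible-1} with the short exact sequence $\mrm{Fil}^{\geq i}_{\mrm{Nyg}} \to R\Gamma_{\mrm{crys}} \to R\Gamma_{\mrm{crys}}/\mrm{Fil}^{\geq i}_{\mrm{Nyg}}$ (the Hodge-filtered quotient being handled by Lemma~\ref{lem:contractible}). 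The multiplicativity and connectivity-completeness argument of Theorem~\ref{thm:tc-zero} (via Lemma~\ref{lem:geom-rel}) then gives $L_{\bA^1}\mrm{Fil}^{\geq 0}_{BMS}TC^-(\bF_p) \simeq_p 0$. The negative piece $TC^-/\mrm{Fil}^{\geq 0}_{BMS}TC^- \simeq \colim_n \mrm{Fil}^{\geq -n}_{BMS}TC^-/\mrm{Fil}^{\geq 0}_{BMS}TC^-$ is a filtered colimit whose finite extensions are built from graded pieces $R\Gamma_{\mrm{crys}}[-2k]$ for $1 \leq k \leq n$; each such piece vanishes after $L_{\bA^1}$ and $p$-completion by Proposition~\ref{lem:contractible-1}, and since $L_{\bA^1}$ commutes with colimits, so does the entire negative piece.

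Combining both halves via the cofiber sequence yields $L_{\bA^1}TC^-(\bF_p) \simeq_p 0$, which extends to all $R \in \CAlg_{\bF_p}$ by the module-over-vanishing-ring argument in the proof of Theorem~\ref{thm:tc-zero}, and Theorem~\ref{lem:thh} finally delivers $L_{\bA^1}TP \simeq_p 0$. The main obstacle is the positive-part analysis: specifically, extending the crystalline vanishing of Proposition~\ref{lem:contractible-1} to the Nygaard-filtered subcomplexes, which is a short exact-sequence exercise but is not recorded as a standalone lemma in the paper, and requires one to unwind the BMS identification of $\mrm{gr}^i_{BMS}TC^-$ in characteristic $p$ beyond the $i=0$ case already used in the proof of Theorem~\ref{thm:tc-zero}.
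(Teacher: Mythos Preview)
Your approach is valid in outline but takes a substantially more laborious route than the paper's. The paper's argument is short and structural: from Theorem~\ref{thm:tc-zero} and Theorem~\ref{lem:thh} one knows that after $L_{\bA^1}$ both $\mrm{can}$ and $\phi_p$ are equivalences $TC^-\to TP$. The paper then exploits the explicit relations $\mrm{can}(v)=\sigma^{-1}$ and $\phi_p(v)=p\sigma^{-1}$ in $\pi_*TP(\bF_p)$ (diagram~\eqref{eq:phi-can}) to exhibit multiplication by $p$ on $L_{\bA^1}TP$ as a composite of equivalences; a $p$-complete object on which $p$ acts invertibly is zero. The paper also records, in Remark~\ref{rem:alternate}, an even shorter alternative: $TP/p\simeq HP$ in characteristic $p$, and $L_{\bA^1}HP$ vanishes by Lemma~\ref{lem:contractible} since it is periodized derived de Rham cohomology. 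Either way, no further BMS-filtration analysis is needed.

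Your route---rerunning the BMS-filtration argument of Theorem~\ref{thm:tc-zero} on $TC^-$ itself---can be made to work, but it leans on inputs the paper does not supply and that you pass over too quickly. First, the completeness step: the paper's proof of Theorem~\ref{thm:tc-zero} uses the uniform bound $\mrm{Fil}^{\geq i}_{BMS}TC(R)\in\Spt_{\geq i-1}$ for smooth $R$ from \cite{AMMN}. The analogue for $TC^-$ is \emph{not} uniform in $\dim R$: the graded pieces $\mrm{Fil}^i_{\mrm{Nyg}}R\Gamma_{\mrm{crys}}(R)[2i]$ live in homological degrees $[2i-d,2i]$ for $R$ smooth of dimension $d$, so the terms $\mrm{Fil}^{\geq i}_{BMS}TC^-(\bF_p[\Delta^n])$ in the simplicial diagram have connectivity dropping with $n$. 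One must actually run the geometric-realization spectral sequence and observe that the $d$-dependence cancels against the simplicial degree to recover connectivity $\geq 2i$ for the realization; you do not flag this. Second, your ``negative piece'' argument presupposes that the $\bZ$-indexed BMS filtration on $TC^-$ is exhaustive, which is not automatic after quasisyntomic descent and is not established in the paper. These gaps are fillable, but the paper's $\phi_p$-versus-$\mrm{can}$ trick (or the mod-$p$ reduction to $HP$) sidesteps all of this bookkeeping in one line.
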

 
 \begin{proof} Using the formula \cite{nikolaus-scholze}, we view $TC$ as the equalizer of $\mrm{can},\phi_p: TC^- \rightrightarrows TP$ and similarly for the $L_{\bA^1}$-local version as this functor preserves finite limits. But now Theorem~\ref{thm:tc-zero} proves that $L_{\bA^1}TC \simeq 0$, whence the maps $\mrm{can}$ and $\phi_p$ are homotopic and, in particular, $\phi_p$ is invertible since $\mrm{can}$ is by Lemma~\ref{lem:thh}. Since everything in sight is $p$-completed suffices to prove that $L_{\bA^1}TP$ is also $\bZ[\tfrac{1}{p}]$-linear, i.e., $p$ acts invertibly.
 
 
 According to \cite[Section IV.4]{nikolaus-scholze} (see also \cite[Proposition 12]{lebras}) the homotopy groups of $TC^-(\bF_p)$ and $TP(\bF_p)$ are as follows:
\[
\pi_*(TC^-(\bF_p)) = \Z_p[u,v]/(uv-p) \qquad \pi_*(TP(\bF_p)) = \bZ_p[\sigma, \sigma^{-1}].
\]
Furthermore, since $\mrm{can}(v) = \sigma^{-1}$ and $\phi_p(v) = p\sigma^{-1}$ we have the following commutative diagram of presheaves of $TC^-(\bF_p)$-modules:
\begin{equation} \label{eq:phi-can}
\begin{tikzcd}
\Sigma^{-2}TP \ar{rr}{\sigma^{-1}} &  & TP \\
\Sigma^{-2}TC^- \ar{rr}{v} \ar{u}{\mrm{can}} \ar{d}{\phi_p} & & TC^-\ar{u}{\mrm{can}} \ar{d}{\phi_p}\\
\Sigma^{-2}TP \ar{r}{\sigma^{-1}} & TP \ar{r}{p} & TP,
\end{tikzcd}
\end{equation}
But now, after applying $\bL_{\A^1}$, we see that $\mrm{can}$ and $\mrm{\phi}_p$ are invertible by the previous discussion so that the endomorphism $p$ is invertible after applying $L_{\A^1}$.

 \end{proof}

\begin{rem} \label{rem:alternate} Alternatively, we can prove Corollary~\ref{lem:tp-zero} by showing that $L_{\bA^1}TP/p \simeq L_{\bA^1}HP \simeq 0$. The latter equivalence is just a version of periodized de Rham cohomology which vanishes in characteristic $p$ by Lemma~\ref{lem:contractible}. We thank Antieau for pointing this out.
\end{rem}

\section{Profinite vanishing over the integers} We finish off with vanishing of profinite $TC$ after $\bA^1$-localization. Let us recall what we mean by integral $TC$ and its various localizations.

\sssec{} If $E$ is a spectrum, then the profinite completion, denoted by $E^{\comp}$ is modeled as the cofiber of the map
\[
\underline{\Maps}(\bQ, E) \rightarrow E.
\]
The model for integral topological cyclic homology is given by Nikolaus-Scholze in \cite[Section II.1]{nikolaus-scholze}:
\begin{equation} \label{eq:tc-z}
TC(R) = \mathrm{Eq}(\mrm{id}, \phi^{hS^1}:TC^-(R) \rightrightarrows TP(R)^{\comp}),
\end{equation}
using the implicit identification \cite[Lemma II.4.2]{nikolaus-scholze}
\[
(X^{tS^1})^\comp_p \simeq (X^{tC_p})^{hS^1}.
\]
We will show that the $\bA^1$-localization of profinitely-completed $TC^{\comp}$ is zero.

\sssec{} Following \cite[Definition 3.13]{LT}, given a $\bZ$-linear localizing invariant, i.e., a functor $\sF:\Cat^{\perf}_{\bZ}\rightarrow \Spt$ we can define their ${\bA^1}$-localization by taking 
\[
|\sF(\sC \otimes_{\bZ} \Perf(\bZ[\Delta^{\bullet}])| =:(L_{\bA^1}\sF)(\sC).
\]
Following the conventions of the above sections, this ${\bA^1}$-localization functors are of the form
\[
L_{\bA^1}: \PSh((\Cat^{\perf}_{\bZ})^{\op},\Spt) \rightarrow \PSh((\Cat^{\perf}_{\bZ})^{\op},\Spt),
\]
and we will indicate when equivalences are only true after further profinite completion by $L_{\bA^1}(X) \simeq^{\comp} L_{\bA^1}(Y)$.

\sssec{} The only reason to extend our functors to the noncommutative world, i.e. let it take values on $\bZ$-linear categories, is to make sense of the next lemma; see also~\ref{rem:ncworld}.


\begin{lem}\label{thm:tc} The functor $L_{\bA^1}TC$ is truncating.

\end{lem}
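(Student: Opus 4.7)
The plan is to show $L_{\bA^1}TC(R) \simeq L_{\bA^1}TC(\pi_0 R)$ for every connective $\bE_1$-algebra $R$ over $\bZ$ (equivalently, to verify the condition on every $\bZ$-linear perfect category in the localizing sense), by using Dundas--Goodwillie--McCarthy (DGM) to trade the statement for an analogous one about $L_{\bA^1}K$, which is already known.

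The first step is to invoke DGM in its derived form. The canonical map $R \to \pi_0 R$ is a derived nilpotent extension (its fiber $\tau_{\geq 1} R$ is a pro-nilpotent bimodule built up through the Postnikov tower $R = \lim_n \tau_{\leq n} R$, each stage of which is a square-zero extension of $\tau_{\leq n-1} R$ by $(\pi_n R)[n]$ with $n \geq 1$). By DGM the square
\[
\begin{tikzcd}
K(R) \ar{r} \ar{d} & K(\pi_0 R) \ar{d} \\
TC(R) \ar{r} & TC(\pi_0 R)
\end{tikzcd}
\]
is cartesian at each Postnikov stage; passing to the limit in $n$ (which is permitted because the fibers are bounded below and the tower converges) gives a cartesian square for $R \to \pi_0 R$ itself.

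The second step is to apply $L_{\bA^1}$, which is exact and in particular preserves cartesian squares. This identifies the relative term
\[
\fib\bigl(L_{\bA^1}TC(R) \to L_{\bA^1}TC(\pi_0 R)\bigr) \simeq \fib\bigl(L_{\bA^1}K(R) \to L_{\bA^1}K(\pi_0 R)\bigr).
\]
The final step is to observe that $L_{\bA^1}K$ is already known to be truncating on connective $\bE_1$-$\bZ$-algebras; this is essentially the content of \cite{KST} combined with the cdh-sheaf property of $K^{\mrm{inv}}$ proved in \cite{LT}. Hence the right-hand fiber vanishes, so does the left-hand fiber, and $L_{\bA^1}TC$ is truncating.

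The main obstacle I expect is ensuring the Postnikov-convergence step in the application of DGM: classical DGM is stated for honest nilpotent ideal quotients, and the derived statement for $R \to \pi_0 R$ requires either a connectivity/convergence argument stage-by-stage on the Postnikov tower, or invoking one of the published derived refinements (as appear in the $\infty$-categorical reformulations of DGM). Once that is in hand, the remainder is formal, with all the substantive input packaged in the truncating property of $L_{\bA^1}K$.
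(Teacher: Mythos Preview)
Your argument is correct and is essentially the same as the paper's: both use Dundas--Goodwillie--McCarthy to identify the relative term, the exactness of $L_{\bA^1}$, and the known fact that $KH=L_{\bA^1}K$ is truncating; the paper simply packages this via the fiber sequence $L_{\bA^1}K^{\mrm{inv}}\to KH\to L_{\bA^1}TC$ and observes that the first two terms are truncating. Two small points: in Step~2 you should say explicitly that the DGM square is cartesian at each $R[\Delta^n]$ (using $\pi_0(R[\Delta^n])=(\pi_0R)[\Delta^n]$) before taking the geometric realization, and the correct reference for $KH$ being truncating is \cite[Proposition~3.14]{LT} directly---it does not follow from KST together with the cdh-sheaf property of $K^{\mrm{inv}}$.
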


\begin{proof} We have a fiber sequence
\[
L_{\bA^1}K^{\mrm{inv}} \rightarrow L_{\bA^1}K=:KH \rightarrow L_{\bA^1}TC.
\]
The presheaf, $KH$ is truncating by \cite[Proposition 3.14]{LT}. 

We now claim that $L_{\bA^1}K^{\mrm{inv}}$ is a truncating invariant. To begin with, the Dundas-Goodwillie-McCarthy theorem informs us that $K^{\mrm{inv}}$ is a truncating invariant. But since $\pi_0(R[t]) \simeq \pi_0(R)[t]$ for $R$ an $\bZ_p$-$\bE_1$-algebra, we see that $K^{\mrm{inv}}$ remains truncating after applying $L_{\bA^1}$ since $L_{\bA^1}K^{\mrm{inv}}(R)$ is computed as a geometric realization of $K^{\mrm{inv}}(\pi_0(R)[T_1, \cdots T_n])$. Thus we conclude that $L_{\bA^1}TC$ is truncating.
\end{proof}



\begin{rem} \label{rem:tc} Lemma~\ref{thm:tc} gives a structural explanation to the non-$\bA^1$-invariance of $TC$. Indeed, if it were, then $TC \simeq L_{\bA^1}TC$ and so it would be nilinvariant over any base ring. This is certainly not true over $\bF_p$, by looking at the example $\bF_p[x]/(x)^2 \rightarrow \bF_p$ and the calculations of \cite{hm-poly, speirs-poly}. The non-nilinvariance of $TC$ is indeed one of the main desiderata for its invention --- as an approximation to the non-nilinvariant part of algebraic $K$-theory.

\end{rem}

\ssec{Integral topological cyclic homology} Finally:

\begin{thm} \label{thm:vanish} For any animated ring $R$, $(L_{\bA^1}TC^{\comp})(R) \simeq 0$. Therefore, the rationalization map $TC \rightarrow TC_{\bQ}$ is an $L_{\bA^1}$-equivalence.
\end{thm}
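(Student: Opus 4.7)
The plan is to show that $L_{\bA^1}TC(R)$ is rational (i.e., $\bQ$-linear) for every animated $R$. From this, the vanishing $(L_{\bA^1}TC^{\comp})(R) \simeq 0$ follows via the compatibility of $L_{\bA^1}$ with profinite completion: for rational $E$, the defining cofiber $E^{\comp} = \mathrm{cofib}(\underline{\Maps}(\bQ,E) \to E)$ vanishes (mapping out of $\bQ$ is the identity on a $\bQ$-module). The ``therefore'' clause is immediate, since rationality of $L_{\bA^1}TC$ is exactly the statement that $TC \to TC_\bQ$ is an $L_{\bA^1}$-equivalence (using that $L_{\bA^1}$ commutes with rationalization, which is a filtered colimit).

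The rationality is proved one prime at a time. By Lemma~\ref{thm:tc}, $L_{\bA^1}TC$ extends to a $\bZ$-linear localizing invariant on $\Cat^{\perf}_{\bZ}$. Applying it to the cofiber sequence $\sC \xrightarrow{p\cdot} \sC \to \sC \otimes_\bZ \bF_p$ in $\bZ$-linear stable categories (with $\sC = \Perf(R)$) yields the base-change identity
\[
L_{\bA^1}TC(R)/p \;\simeq\; L_{\bA^1}TC(R \otimes^L_\bZ \bF_p).
\]
Since $R \otimes^L_\bZ \bF_p$ is an animated $\bF_p$-algebra, the right-hand side vanishes by Theorem~\ref{thm:tc-zero}. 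Hence $p$ acts invertibly on $L_{\bA^1}TC(R)$; running this for every prime shows $L_{\bA^1}TC(R)$ is $\bQ$-linear. The truncating property also permits the alternative identification $L_{\bA^1}TC(R \otimes^L_\bZ \bF_p) \simeq L_{\bA^1}TC(R/p)$ before invoking Theorem~\ref{thm:tc-zero}, but either route works.

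The remaining step --- which is the main technical obstacle --- is the interchange $L_{\bA^1}(TC^{\comp}) \simeq (L_{\bA^1}TC)^{\comp}$. This is delicate because $L_{\bA^1}$ is the sifted colimit of~\eqref{eq:suslin}, while $(-)^{\comp}$ is a limit-flavored construction (involving a mapping spectrum out of $\bQ$, or equivalently an inverse limit over the $p$-completions). One handles it prime by prime: profinite completion is controlled by the $p$-completions of a bounded-below spectrum, and on the presheaves at hand Lemma~\ref{lem:geom-rel} applies --- the terms $TC(R[\Delta^{\bullet}])$ are uniformly bounded below (being $TC$ of connective animated rings) --- so that $L_{\bA^1}$ commutes with $p$-completion on them. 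Combined with the rationality of $L_{\bA^1}TC(R)$ established above, this gives $(L_{\bA^1}TC^{\comp})(R) \simeq 0$ as desired.
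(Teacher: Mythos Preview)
The central step of your argument is flawed: there is no Verdier/exact sequence $\sC \xrightarrow{p\cdot} \sC \to \sC \otimes_\bZ \Perf(\bF_p)$ of stable $\infty$-categories. Multiplication by $p$ is a natural endomorphism of the identity functor on a $\bZ$-linear $\sC$, not a fully faithful exact functor whose Verdier quotient is $\sC \otimes_\bZ \Perf(\bF_p)$; note also that $\Cat^{\perf}_\bZ$ is not itself stable, so ``cofiber of $p$'' has no meaning there. Consequently the ``base-change identity'' $E(R)/p \simeq E(R \otimes^L_\bZ \bF_p)$ is false for localizing invariants in general: already for $E = K$ and $R = \bZ$ one has $\pi_0(K(\bZ)/p) = \bZ/p$ while $\pi_0 K(\bF_p) = \bZ$, and for $E=THH$ one has $\pi_2(THH(\bZ)/p) = 0$ while $\pi_2 THH(\bF_p) = \bF_p$. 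So your reduction to Theorem~\ref{thm:tc-zero} does not go through.

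The paper reaches $\bF_p$ from $\bZ$ by a genuinely different route. After reducing to $R=\bZ$ via the module structure, one replaces $\bZ$ by $\bZ_p$ (harmless for $p$-complete $TC$), then invokes \emph{continuity} of $TC$ along $\bZ_p \to \lim_s \bZ_p/p^s$ (via \cite{CMM}) to identify $L_{\bA^1}TC_p^\comp(\bZ_p)$ with the inverse limit of $L_{\bA^1}TC_p^\comp(\bZ_p/p^s)$, and finally uses that $L_{\bA^1}TC$ is truncating, hence nilinvariant by \cite[Corollary~3.5]{LT}, to collapse each term of the tower to $L_{\bA^1}TC_p^\comp(\bF_p)=0$. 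Your handling of the interchange between $L_{\bA^1}$ and $p$-completion via Lemma~\ref{lem:geom-rel} and uniform bounded-belowness is correct and is exactly what the paper does; only the ``base-change'' shortcut must be replaced by the continuity $+$ nilinvariance argument. (The paper also sketches, in \ref{nc-a1}, a Gysin-type alternative closer in spirit to what you want, but that is a localization sequence on $\Spec\bZ_p$, not a mod-$p$ cofiber of categories.)
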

\begin{proof} As in previous arguments, we reduce to the case of $R = \bZ$. We note that the terms $TC^{\comp}(\bZ[\Delta^{\bullet}])$ are uniformly bounded below: since $TC_p^\comp$ converts $p$-adic equivalences to $p$-adic equivalences we need only prove that the terms of $TC^{\comp}_p(\bZ_p[\Delta^{\bullet}])$ are uniformly bounded below for each $p$. But this follows, for example, from the fact that $p$-complete $TC$ converts a cyclotomic spectrum whose underlying spectrum is connective to a $-1$-connective spectrum by \cite[Lemma 2.5]{CMM}.

Hence, after Lemma~\ref{lem:geom-rel}, we need only prove that $(L_{\bA^1}TC^{\comp})(\bZ) \simeq^{\comp} 0$. It then suffices to prove that $(L_{\bA^1}TC^{\comp}_p)(\bZ) \simeq_p 0$ is zero for all prime numbers $p$. Hence we need only prove that $(L_{\bA^1}TC^{\comp}_p)(\bZ) \simeq_p (L_{\bA^1}TC^{\comp}_p)(\bZ_p) \simeq 0$. To this end, we claim:
\begin{itemize}
\item the canonical map
\[
L_{\bA^1}TC^{\comp}_p(\bZ_p) \rightarrow \lim_s L_{\bA^1}TC^{\comp}_p(\bZ_p/p^s)
\]
is an equivalence.
\end{itemize}
To see that the claim implies the desired vanishing, note that since $L_{\bA^1}TC$ is truncating, it is nilinvariant by \cite[Corollary 3.5]{LT}. Therefore the limit above is stabilizes as $L_{\bA^1}TC^{\comp}_p(\bF_p$) which is zero by Theorem~\ref{thm:tc-zero}.

To prove the desired claim, we note that the limit above commutes with $L_{\bA^1}$ since the terms in $TC^{\comp}_p(\bZ_p/p^s[\Delta^{\bullet}])$ are uniformly bounded below and geometric realization behaves as a finite colimit in a range of degrees (just like the argument in Lemma~\ref{lem:geom-rel}). Thus it suffices to prove that for each $n \geq 0$, the map
\[
TC(\bZ_p[T_1, \cdots, T_n]) \rightarrow \lim_s TC(\bZ_p/p^s[T_1, \cdots, T_n]) 
\]
is a $p$-adic equivalence. Since $p$-adic $TC$ preserves $p$-adic equivalences, we may $p$-adically complete the rings inside and prove that the map
\[
TC(\bZ_p[T_1, \cdots, T_n]^{\comp}_p) \rightarrow  \lim_s TC(\bZ_p/p^s[T_1, \cdots, T_n])
\]
is a $p$-adic equivalence. This then follows by continuity of $THH$ as in \cite[Proposition 5.4]{CMM}, noting that $\bZ_p[T_1, \cdots, T_n]^{\comp}_p/p \simeq \bF_p[T_1, \cdots, T_n]$ is $F$-finite since it is finite type over $\bF_p$ and the continuity of $TC$ as in \cite[Remark 2.8]{CMM}.

For the last statement we look at the fracture square (which is cartesian)
\[
\begin{tikzcd}
TC \ar{r} \ar{d} & TC_{\bQ}\ar{d}\\
TC^{\comp} \ar{r} & TC^{\comp}_{\bQ}.
\end{tikzcd}
\]
We have proved that, after applying $L_{\bA^1}$, the bottom left corner is zero. This means that the the bottom right corner is zero as well after applying $L_{\bA^1}$ since it is a ring admitting a ring map from the zero ring. Hence the bottom map is an equivalence after applying $L_{\bA^1}$, whence the top map is an equivalence after applying $L_{\bA^1}$ as well since $L_{\bA^1}$ preserves finite limits.
\end{proof}

\sssec{Rational situation} Let us consider these functors on $\CAlg_{\bQ}$. In this situation~\eqref{eq:tc-z} tells us that $TC \simeq TC^- = HC^-$ since $TP^{\comp} \simeq 0$. By the Geller-Weibel theorem, Corollary~\ref{cor:gw}, we further have that $L_{\bA^1}HC^- \simeq HP$. Hence we get that $L_{\bA^1}TC \simeq HP$ which is a non-zero, but familiar, invariant.

\sssec{}\label{rem:ncworld} We further note that our arguments also show that, as a $\bZ$-linear localizing invariant, $L_{\bA^1}TC$ vanishes after profinite completion. Hence we obtain an analogous ``purely rational" result for $L_{\bA^1}TC$ regarded in the noncommutative setting.

\sssec{}\label{nc-a1} In the proof of Theorem~\ref{thm:vanish}, we can avoid continuity results, by exhibiting a ``Gysin sequence" 
\[
L_{\bA^1}TC(\bF_p) \rightarrow L_{\bA^1}TC(\bZ_p) \rightarrow L_{\bA^1}TC(\bQ_p),
\]
and noting that the last term vanishes after profinite completion. Gysin sequences in the noncommutative world appears to interact well with $\bA^1$-invariance as indicated by the work of Tabuada and Van den Bergh \cite{tabuada-vdb} in geometric settings; the author thanks Mathew for pointing this out. We are working on a sequel establishing these Gysin sequences in the $\bA^1$-invariant, noncommutative world.

\bibliographystyle{alphamod}

\let\mathbb=\mathbf

{\small
\bibliography{references}
}

\parskip 0pt

\end{document}